\documentclass{mpi2015-cscpreprint}

\usepackage[american]{babel}
\usepackage{graphicx,subfig,float,tikzscale,tikz,pgfplots}
\usepackage{amssymb}
\usepackage{amsthm}

\DeclareMathOperator*{\esssup}{ess\,sup}
\DeclareMathOperator*{\essinf}{ess\,inf}

\newtheorem{theorem}{Theorem}[section]

\newtheorem{lemma}[theorem]{Lemma}

\newtheorem{problem}{Problem}[section]
\newtheorem{definition}{Definition}[section]
\newtheorem{remark}{Remark}[section]

\numberwithin{equation}{section}

\begin{document}

\title{Periodic optimal control \\of a plug flow reactor model \\with an isoperimetric constraint}
  
\author[$1$]{Yevgeniia~Yevgenieva}
\affil[$1$]{Max Planck Institute for Dynamics of Complex Technical Systems,
              39106 Magdeburg, Germany\\ \newline
              Institute of Applied Mathematics and Mechanics, National Academy of Sciences of Ukraine, \newline 84116 Sloviansk, Ukraine\authorcr
  \email{yevgenieva@mpi-magdeburg.mpg.de}, \orcid{0000-0000-0000-0000}}
  
\author[$2$]{Alexander~Zuyev}
\affil[$2$]{Max Planck Institute for Dynamics of Complex Technical Systems,
              39106 Magdeburg, Germany\\ \newline
              Otto von Guericke University Magdeburg, 39106 Magdeburg, Germany\\ \newline
              Institute of Applied Mathematics and Mechanics, National Academy of Sciences of Ukraine, \newline 84116 Sloviansk, Ukraine\authorcr
  \email{zuyev@mpi-magdeburg.mpg.de}, \orcid{0000-0000-0000-0000}}

\author[$3$]{\, Peter~Benner}
\affil[$3$]{Max Planck Institute for Dynamics of Complex Technical Systems,
              39106 Magdeburg, Germany\\ \newline
              Otto von Guericke University Magdeburg, 39106 Magdeburg, Germany\authorcr
  \email{benner@mpi-magdeburg.mpg.de}, \orcid{0000-0000-0000-0000}}

\author[$4$]{Andreas~Seidel-Morgenstern}
\affil[$4$]{Max Planck Institute for Dynamics of Complex Technical Systems,
              39106 Magdeburg, Germany\\ \newline
              Otto von Guericke University Magdeburg, 39106 Magdeburg, Germany\authorcr
  \email{anseidel@ovgu.de}, \orcid{0000-0000-0000-0000}}
  
\shorttitle{Periodic optimal control...}
\shortauthor{Ye.~Yevgenieva, A.~Zuyev, P.~Benner, A.~Seidel-Morgenstern}
\shortdate{}
  
\keywords{isoperimetric optimal control problem, hyperbolic system, plug flow reactor model, method of characteristics}

\msc{93C20, 92E20, 49K20, 49N20}
  
\abstract{We study a class of nonlinear hyperbolic partial differential equations with boundary control. This class describes chemical reactions of the type ``$A \to$ product'' carried out in a plug flow reactor (PFR)  in the presence of an inert component. An isoperimetric optimal control problem with periodic boundary conditions and input constraints is formulated for the considered mathematical model in order to maximize the mean amount of product over the period. 
For the single-input system, the optimality of a bang-bang control strategy is proved in the class of bounded measurable inputs. The case of controlled flow rate input is also analyzed by exploiting the method of characteristics. A case study is performed to illustrate the performance of the reaction model under different control strategies.}

 \novelty{The key contributions of our work are the following:
\begin{itemize}
\item an analytic representation of the cost functional is derived for the PFR model in the cases of one- and two-dimensional inputs by using the method of characteristics;
\item local conditions are obtained in the general class of measurable control functions;
\item the optimal controls are not unique, and a parameterization with one switching only can be used to achieve the optimality condition.
\end{itemize}}

\maketitle

\section{Introduction}

It has been known for several decades that periodic control strategies can improve the performance of nonlinear chemical reactions in comparison to their steady-state operations~\cite{D67,WOM81,SH13}.
On the one hand, lumped parameter reaction models with harmonic inputs have been extensively studied in the literature with the use of frequency-domain methods (see, e.g.,~\cite{F21,N22} and references therein),
On the other hand, it follows from the Pontryagin maximum principle that the optimal controls are bang-bang for the maximization of the average reaction product within the considered class of models~\cite{S76,WOM81,ZS-MB2017}. An analytical design of periodic bang-bang controllers has been proposed in~\cite{BSZ19} for the isoperimetric optimization problem.
The above optimal control techniques, developed for model systems or ordinary differential equations, are not directly applicable to infinite-dimensional reaction models. 

An important class of distributed parameter control systems is represented by mathematical models of plug flow reactors (PFR) governed by hyperbolic systems of partial differential equations~\cite{BC16}.
Even though there is a comprehensive engineering literature on PFR models (cf.~\cite{SH13} and references therein), the periodic optimal control problems require a rigorous analysis from the mathematical viewpoint. Just a few results, dealing with non-optimality of steady state solutions and comparison of different control strategies~\cite{Grab1985,S91} as well as the $\Pi$-test and properness condition~\cite{NS97}, are available in this area.

In this paper, we will study the nonlinear hyperbolic control systems that describe chemical reactions of the type ``$A\rightarrow$ product'' carried out in a PFR  in the presence of an additional inert component (dilutant or solvent).
The key contributions of our work are summarized below:
\begin{itemize}
\item an analytic representation of the cost functional is derived for the PFR model in the cases of one- and two-dimensional inputs by using the method of characteristics;
\item local optimality conditions are obtained in the general class of measurable control functions;
\item the optimal controls are not unique, and a parameterization with one switching only can be used to achieve the local optimality condition.
\end{itemize}

The rest of this paper is organized as follows. A single-input nonlinear control system will be considered in Section~\ref{PFR} as a PFR model with boundary injection. The isoperimetric optimal control problem will be  solved for this model in order to maximize the conversion of the input reactant ($A$) into the product. An extension of these results to the PFR with time-varying flow-rate will be presented in Section~\ref{PFR2}.
A comparative analysis of different control strategies will be performed in Section~\ref{case_study} under a specific choice of reaction parameters.
Finally, Section~\ref{conc} contains some concluding remarks.

\section{Plug flow reactor model}\label{PFR}

Consider an isothermal reaction of the type ``$A\rightarrow$ product'' in a plug flow reactor (PFR) model~\cite[p.~394]{SH13}:
\begin{equation}\label{eq1}
\frac{\partial C_A (x,t)}{\partial t}+v\frac{\partial C_A(x,t)}{\partial x}=-kC_A^n(x,t),\qquad (x,t)\in \Omega=[0,L]\times\mathbb{R},
\end{equation}
\begin{equation}\label{bound1}
C_A(0,t)=C_{A_0}(t),
\end{equation}
where $C_A(x,t)$ is the reactant $A$ concentration inside the reactor at the distance $x$ from the inlet and time $t$, $L$ is the length of the reactor tube, $C_{A_0}(t)$ is the concentration of $A$ in the inlet stream that contains also another inert component, $n>0$ is the reaction order, $v>0$ is the flow-rate of the reaction stream, and $k>0$ is the kinetic constant. The function $C_{A_0}(t)\in [C_{min},C_{max}]$ is treated as the control input and assumed to be bounded by some constants $C_{max}>C_{min}>0$.

The boundary value problem~\eqref{eq1}, \eqref{bound1} can be solved by the method of characteristics~\cite{Grab1985}:
\begin{equation}\label{solution1}
C_A(x,t)=\left(C_{A_0}\left(t-\frac xv\right)^{-(n-1)}+\frac{k(n-1)}{v}x\right)^{-\frac1{n-1}},\quad n\neq1.
\end{equation}
For the case $n=1$, the solution has the following form:
\begin{equation}\label{solution11}
C_A(x,t)=C_{A_0}\left(t-\frac xv\right)e^{-\frac kvx}.
\end{equation}

If the function $C_{A_0}(t)$ is continuously differentiable on $\mathbb R$, then expressions~\eqref{solution1}--\eqref{solution11} define the classical solution of the problem~\eqref{eq1}--\eqref{bound1}.
It is easy to see that, in order to define $C(x,s)$ for all $x\in [0,L]$ at a given $s$, the information about $C_{A_0}(t)$ on the closed interval $t\in [s-\frac{L}{v},s]$ is needed. We are interested in studying an optimal control problem for system~\eqref{eq1}--\eqref{bound1} with  $\tau$-periodic controls $C_{A_0}(t)$. In this case, it suffices to define the control $C_{A_0}(t)$ on an interval $t\in [0,\tau)$  and extend it to $t\in\mathbb R$ by $\tau$-periodicity.
For the subsequent formal analysis,
we allow the functions $C_{A_0}(t)$ to be discontinuous and introduce the class of admissible controls ${\cal U}_\tau$ as follows.

\begin{definition}
Let $\tau>0$, $C_{max}>C_{min}>0$, and $\overline{C}\in [C_{min},C_{max}]$ be given.
The class of admissible controls $\mathcal{U}_{\,\tau}$
consists of all locally measurable functions $C_{A_0}:\mathbb R\to [C_{min},C_{max}]$ such that $C_{A_0}(t)$ is $\tau$-periodic and
\begin{equation}\label{isoperim1}
\frac1\tau\int_0^\tau C_{A_0}(t)dt=\overline{C}. 
\end{equation}
\end{definition}

Formulas~\eqref{solution1}--\eqref{solution11} correctly define the function $C:\Omega \to \mathbb R$ for any $C_{A_0}\in {\cal U}_\tau$. We will refer to these functions $C(x,t)$ as weak solutions of the problem~\eqref{eq1}--\eqref{bound1} (see, e.g.,~\cite{Kruzhkov}). Indeed, the above defined $C(x,t)$ satisfies the integral identity 
\begin{equation}\label{ii1}
\int_\Omega \left(C_A\frac{\partial\varphi}{\partial t}+vC_A\frac{\partial\varphi}{\partial x}+kC_A^n\varphi\right)dx\,dt=0,
\end{equation}
for each smooth test function $\varphi\in C_0^\infty(\Omega)$ with compact support.

Our goal is to optimize the conversion of $A$ to the product  by using time-varying inputs $C_{A_0}(t)$ under the isoperimetric constraint~\eqref{isoperim1} over a given period $\tau$ as follows.

\begin{problem}\label{prob1}
Given $\tau>0$ and $\overline{C}\in[C_{min},C_{max}]$, find a control  $\hat C_{A_0}(\cdot)\in \mathcal{U}_{\,\tau}$ that minimizes the cost
\begin{equation}\label{cost1}
J[C_{A_0}]=\frac1\tau\int_0^\tau C_A(L,t)\,v\,dt
\end{equation}
among all admissible controls $C_{A_0}(\cdot)\in \mathcal{U}_{\,\tau}$. This cost function evaluates the mean molar flux of component $A$ that leaves the reactor divided by the cross section area of the tube (in $[mol\,s^{-1}\,m^{-2}]$). Here, the right-hand side of~\eqref{cost1} contains the (weak) solution $C_A(x,t)$ of the problem \eqref{eq1}, \eqref{bound1} corresponding to the control $C_{A_0}(t)$, so $J[C_{A_0}]$ is well-defined in terms of $C_{A_0}$ by formulas~\eqref{solution1},~\eqref{solution11}.                                     
\end{problem}

In order to describe the optimal controls for Problem~\ref{prob1}, we use the following notations.
For a function $u:[0,\tau)\to \mathbb R$, its $\tau$-periodic extension is denoted by $u^\tau:\mathbb R \to \mathbb R$, so that
$u^\tau(t)\equiv u(t)$ for $t\in [0,\tau)$, and the function $u^\tau(t)$ is $\tau$-periodic.
The Lebesgue measure of a set $A\subset \mathbb R$ is denoted by $\mu(A)$. Now we formulate the main result of this section.

\begin{theorem}\label{th1} Let $\tau>0$, $C_{max}>C_{min}>0$, and $\overline{C}\in [C_{min},C_{max}]$ be given.
\begin{itemize}
\item[1)] If $n=1$, then all control functions $C_{A_0}\in\mathcal{U}_{\,\tau}$ give the same value for the cost functional $J$.
\item[2)] If $n<1$ and $C_{min}>\left(\frac{v}{kL(1-n)}\right)^{-\frac1{1-n}}$, then the steady-state control $C_{A_0}(t)=\overline{C}$ is optimal for Problem~\ref{prob1}.
\item[3)]If $n>1$, then the piecewise constant control $C_{A_0}(t)=u^\tau(t)$ is optimal for Problem~\ref{prob1}, where
\begin{equation}\label{bang1}
u(t)=
\begin{cases}
C_{min}, &\text{ if }\;t\in A^-,
\\ C_{max}, &\text{ if }\;t\in A^+=[0,\tau)\setminus A^-,
\end{cases}
\end{equation}
and $A^-\subset [0,\tau)$ is any Lebesgue-measurable set such that 
\begin{equation*}
\mu(A^-)=\frac{C_{max}-\overline{C}}{C_{max}-C_{min}}\tau.
\end{equation*}
\end{itemize}
\end{theorem}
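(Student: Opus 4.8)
The plan is to collapse the cost functional \eqref{cost1} onto a single scalar nonlinearity evaluated along the control, and then to run the classical Jensen / supporting-chord dichotomy according to the convexity type of that nonlinearity. First I would substitute the explicit (weak) solution into \eqref{cost1}. For $n=1$, formula \eqref{solution11} gives $J[C_{A_0}]=v e^{-kL/v}\cdot\frac1\tau\int_0^\tau C_{A_0}\!\left(t-\frac Lv\right)dt$; the substitution $s=t-\frac Lv$ combined with the $\tau$-periodicity of $C_{A_0}$ and the isoperimetric identity \eqref{isoperim1} turns this into $J[C_{A_0}]=v e^{-kL/v}\,\overline C$, which is independent of the admissible control and proves part~1). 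For $n\neq1$, formula \eqref{solution1} and the same periodic shift give $J[C_{A_0}]=\frac v\tau\int_0^\tau g\bigl(C_{A_0}(t)\bigr)dt$, where
\[
g(c)=\Bigl(c^{\,1-n}+\tfrac{kL(n-1)}{v}\Bigr)^{-\frac1{n-1}}.
\]
Hence the problem reduces to minimizing $\int_0^\tau g(C_{A_0}(t))\,dt$ over $C_{A_0}\in\mathcal U_\tau$.

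The second step is the convexity analysis of $g$ on $[C_{min},C_{max}]$. Under the hypothesis of part~2), the stated lower bound on $C_{min}$ is precisely the inequality $C_{min}^{\,1-n}>\frac{kL(1-n)}{v}$, which (since $1-n>0$) guarantees that the base $c^{1-n}+\frac{kL(n-1)}{v}$ is positive for all $c\in[C_{min},C_{max}]$; for $n>1$ this positivity is automatic. So in all relevant cases $g$ is smooth on $[C_{min},C_{max}]$. The clean route to $g''$ is to differentiate the algebraic identity $g(c)^{1-n}-c^{1-n}=\frac{kL(n-1)}{v}$ (a constant), which yields $g'(c)=(g(c)/c)^n>0$ and then
\[
g''(c)=n\,g(c)^n\,c^{-n-1}\Bigl((g(c)/c)^{\,n-1}-1\Bigr).
\]
Because the additive constant is nonzero, the same identity forces $g(c)<c$ on the interval, so the last bracket---and therefore $g''$---has the sign of $n-1$. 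Thus $g$ is strictly convex on $[C_{min},C_{max}]$ when $n<1$ and strictly concave when $n>1$.

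The last step applies the two standard optimality principles. For $n<1$, Jensen's inequality for the convex $g$ gives $\frac1\tau\int_0^\tau g(C_{A_0})\,dt\ge g\bigl(\frac1\tau\int_0^\tau C_{A_0}\,dt\bigr)=g(\overline C)$, with equality for $C_{A_0}\equiv\overline C$; since $J$ is a positive multiple of this integral, the steady-state control is optimal, which is part~2). For $n>1$, let $\ell$ be the affine function interpolating $g$ at the endpoints $C_{min}$ and $C_{max}$; strict concavity gives $g(c)\ge\ell(c)$ on $[C_{min},C_{max}]$ with equality only at those endpoints, hence $\frac1\tau\int_0^\tau g(C_{A_0})\,dt\ge\frac1\tau\int_0^\tau\ell(C_{A_0})\,dt=\ell(\overline C)$, using affinity of $\ell$ and \eqref{isoperim1}. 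The control $u^\tau$ of \eqref{bang1} takes values in $\{C_{min},C_{max}\}$ almost everywhere, so it attains this lower bound, and \eqref{isoperim1} forces $C_{min}\mu(A^-)+C_{max}\bigl(\tau-\mu(A^-)\bigr)=\overline C\tau$, i.e.\ $\mu(A^-)=\frac{C_{max}-\overline C}{C_{max}-C_{min}}\tau$; any measurable $A^-$ of this measure is admissible and optimal, e.g.\ the single-switch choice $A^-=[0,\mu(A^-))$, which also yields the non-uniqueness claim.

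The main obstacle is the sign determination of $g''$: carried out naively it is a cumbersome second-derivative computation, and the argument hinges on spotting the identity $g(c)^{1-n}-c^{1-n}=\text{const}$, which simultaneously compresses the differentiation to the compact formula above and delivers the inequality $g(c)<c$ needed to read off the sign. Once the convexity type of $g$ is pinned down, the remainder is the textbook Jensen-versus-chord argument, with only routine verifications that the reduced functional is well defined on $\mathcal U_\tau$ (the periodic shift is legitimate for merely measurable periodic inputs) and that $\mu(A^-)\in[0,\tau]$, which holds since $\overline C\in[C_{min},C_{max}]$.
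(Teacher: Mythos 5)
Your proposal is correct, and parts 1) and 2) follow the paper's route essentially verbatim: reduce $J$ to $\frac v\tau\int_0^\tau\Phi(C_{A_0}(t))\,dt$ via the periodic shift, compute directly for $n=1$, and apply Jensen's inequality for $n<1$ under the positivity condition on $C_{min}$. For $n>1$ you take a genuinely different route. The paper proves optimality of the bang-bang control through two lemmas: it introduces classes $\mathcal A_{\widetilde C}$ of controls lying above/below a threshold $\widetilde C$ on sets of prescribed measure, shows (Lemma~2.1) via the tangent-line inequality $\Phi(x)-\Phi(y)\le\Phi'(y)(x-y)$ together with the monotonicity of $\Phi'$ that each such control is dominated by an associated bang-bang control, and then argues (Lemma~2.2) by contradiction with the isoperimetric constraint that every admissible control belongs to some $\mathcal A_{\widetilde C}$. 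Your argument replaces both lemmas with the secant inequality: since $\Phi\ge\ell$ on $[C_{min},C_{max}]$ for the chord $\ell$ through the endpoint values, and $\ell$ is affine, every admissible control has cost at least $v\,\ell(\overline C)$ by the isoperimetric identity, and the bang-bang control attains this bound because it takes only the endpoint values. This is shorter, sidesteps the threshold-existence step (whose proof in the paper is its least rigorous part), and the strictness $\Phi>\ell$ on the open interval additionally characterizes the optimal controls as exactly those valued in $\{C_{min},C_{max}\}$ a.e. What the paper's heavier machinery buys is the explicit comparison map $C_{\mathcal A}\mapsto C_b$, which it reuses almost unchanged for the two-input Problem~3.1, where the integrand $\Psi(C)v$ is no longer a function of $C$ alone and the plain chord bound does not apply directly. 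One small slip in your write-up: after obtaining $g''=n\,g^nc^{-n-1}\bigl((g/c)^{n-1}-1\bigr)$ you assert the bracket has the sign of $n-1$; since $0<g/c<1$ it in fact has the sign of $1-n$, which is precisely what your stated conclusion (convex for $n<1$, concave for $n>1$) uses, so the argument itself is unaffected.
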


Note that, as each admissible control $C_{A_0}\in {\cal U}_\tau$ is periodic,
the corresponding function $C_A(x,t)$ in \eqref{solution1} and \eqref{solution11} is also $\tau$-periodic.
For the case $n\neq1$, we modify the cost functional due to periodicity as follows:
\begin{multline}\label{cost1*}
J=J[C_{A_0}]= \frac v\tau\int_0^\tau \left(C_{A_0}\left(t-Lv^{-1}\right)^{-(n-1)}+\frac{kL(n-1)}{v}\right)^{-\frac1{n-1}}dt
\\= \frac v\tau\int_0^\tau \left(C_{A_0}(t)^{-(n-1)}+\frac{kL(n-1)}{v}\right)^{-\frac1{n-1}}dt=:\frac v\tau\int_0^\tau \Phi(C_{A_0}(t)) dt.
\end{multline}

It is easy to see that the function $\Phi$ is increasing and concave if $n>1$. Indeed,
\begin{align}
&\Phi'(\xi)=\left[1+\frac{kL(n-1)}{v}\xi^{n-1}\right]^{-\frac n{n-1}}>0\;\text{ if }n>1,\label{Phi'}
\\&\Phi''(\xi)=-\frac{kL(n-1)n}{v}\left[1+\frac{kL(n-1)}{v}\xi^{n-1}\right]^{-\frac{2n-1}{n-1}}\xi^{n-2}<0\;\text{ if }n>1.\label{Phi''}
\end{align}

To prove Theorem~\ref{th1}, we need to define a special class of control functions for Problem~\ref{prob1}.

\begin{definition}\label{def3}
A function $c:{\mathbb R}\to [C_{min},C_{max}]$ belongs to the class $\mathcal{A}_{\,\widetilde{C}}$ for a given constant $\widetilde{C}\in [C_{min},C_{max}]$, if  $c(\cdot)\in {\cal U}_\tau$ and there exist Lebesgue-measurable sets $A^+\subset [0,\tau)$,  $A^-\subset [0,\tau)$ such that:
\begin{itemize}
    \item[1)] $\essinf_{t\in\mathbb{A}^+}c(t)\geqslant\widetilde{C}$;

    \item[2)] $\esssup_{t\in\mathbb{A}^-}c(t)\leqslant\widetilde{C}$;

    \item[3)] $\mu(A^+\cap A^-)=0$, $\mu(A^+\cup A^-)=\tau$; 
    
    \item[4)] $\mu(A^-)=\frac{C_{max}-\overline{C}}{C_{max}-C_{min}}\tau$.
    
\end{itemize}
\end{definition}

In the paper~\cite{MS-MP2008}, it was reported that the sinusoidal inputs ensure a better performance
of the PFR reactor (with respect to the cost $J$) in comparison to the steady-state input $C_{A_0}(t)\equiv \overline{C}$ if  $n>1$.
We will show in the lemma below that the bang-bang strategies have even better performance than the sinusoidal ones. It is easy to see that $\mathcal{A}_{\tilde C}$ contains the sinusoidal functions. For instance, assuming that $\overline{C}-C_{min}=C_{max}-\overline{C}$, one can show that the function
\begin{equation*}
C(t)=\overline{C}+(\overline{C}-C_{min})\sin\left(\frac{2\pi}{\tau}t\right)
\end{equation*}
belongs to the class $\mathcal{A}_{\,\overline{C}}$. Indeed, setting $A^+=[0,\tau/2)$ and $A^-=[\tau/2,\tau)$, we meet all the requirements of Definition~\ref{def3}.


\begin{lemma}\label{lem1}
Let $n>1$ and let $C_\mathcal{A}(\cdot)\in \mathcal{A}_{\,\widetilde{C}}$ for some $\widetilde{C}\in(C_{min},C_{max})$. Then there exists a control $C_b(\cdot)\in {\cal U}_\tau$ such that
$$
J [C_b] \leqslant J [C_\mathcal{A}],
$$
where the cost $J$ is defined in Problem~\ref{prob1}.
\end{lemma}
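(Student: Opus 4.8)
The plan is to exploit the concavity of $\Phi$ established in \eqref{Phi'}--\eqref{Phi''} together with the representation \eqref{cost1*} of the cost, so that the claim reduces to a one‑variable chord (Jensen‑type) estimate. Write $J[C_{A_0}]=\frac v\tau\int_0^\tau\Phi(C_{A_0}(t))\,dt$ and let $L$ be the affine function that agrees with $\Phi$ at the two endpoints of the control range, i.e.
\begin{equation*}
L(\xi)=\Phi(C_{min})+\frac{\Phi(C_{max})-\Phi(C_{min})}{C_{max}-C_{min}}\,(\xi-C_{min}),\qquad L(C_{min})=\Phi(C_{min}),\quad L(C_{max})=\Phi(C_{max}).
\end{equation*}
Since $n>1$, $\Phi$ is concave on $[C_{min},C_{max}]$, hence its graph lies above every chord and in particular $\Phi(\xi)\geqslant L(\xi)$ for all $\xi\in[C_{min},C_{max}]$.

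Next I would fix the candidate $C_b:=u^\tau$ with $u$ the bang‑bang control \eqref{bang1} attached to any Lebesgue‑measurable set $A^-\subset[0,\tau)$ with $\mu(A^-)=\frac{C_{max}-\overline{C}}{C_{max}-C_{min}}\tau$ (an interval will do), and verify $C_b\in\mathcal{U}_{\,\tau}$: it is $\tau$‑periodic, measurable, takes values in $[C_{min},C_{max}]$, and a short computation with this particular measure gives $\frac1\tau\int_0^\tau C_b(t)\,dt=\overline{C}$, so the isoperimetric constraint \eqref{isoperim1} holds. Then I would compare the two costs to the chord. On one side, since $C_\mathcal{A}\in\mathcal{A}_{\,\widetilde{C}}\subset\mathcal{U}_{\,\tau}$ ranges in $[C_{min},C_{max}]$ and obeys \eqref{isoperim1}, the pointwise bound $\Phi\geqslant L$ and the affinity of $L$ yield
\begin{equation*}
\int_0^\tau\Phi(C_\mathcal{A}(t))\,dt\;\geqslant\;\int_0^\tau L(C_\mathcal{A}(t))\,dt\;=\;\tau\,L(\overline{C}).
\end{equation*}
On the other side, $C_b$ takes only the values $C_{min}$ and $C_{max}$, where $\Phi$ and $L$ coincide, so $\int_0^\tau\Phi(C_b(t))\,dt=\mu(A^-)\Phi(C_{min})+(\tau-\mu(A^-))\Phi(C_{max})=\tau L(\overline{C})$, once more using that $L$ is affine and that the mean of $C_b$ is $\overline{C}$. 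Combining the two displays gives $\int_0^\tau\Phi(C_b)\,dt\leqslant\int_0^\tau\Phi(C_\mathcal{A})\,dt$, i.e. $J[C_b]\leqslant J[C_\mathcal{A}]$.

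There is no serious analytic obstacle: the whole argument is a concavity inequality for $\Phi$ plus bookkeeping of the mean constraint. The only points needing care are (i) checking that the prescribed measure of $A^-$ is exactly what makes $C_b$ admissible, and (ii) keeping the direction of the inequality straight — minimizing the concave functional $J$ over the mean‑constrained class is achieved at the extreme (bang‑bang) controls, whereas by Jensen's inequality the steady state $C_{A_0}\equiv\overline{C}$ is in fact the \emph{worst} admissible control. I would also note that membership in $\mathcal{A}_{\,\widetilde{C}}$ is used only through the inclusion $\mathcal{A}_{\,\widetilde{C}}\subset\mathcal{U}_{\,\tau}$, so the very same chord estimate in fact shows $J[C_b]\leqslant J[C_{A_0}]$ for every $C_{A_0}\in\mathcal{U}_{\,\tau}$ — which is precisely what is then needed to conclude part~3) of Theorem~\ref{th1}.
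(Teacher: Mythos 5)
Your proof is correct, but it takes a genuinely different (and in fact stronger) route than the paper. The paper's proof works with the tangent-line form of concavity, $\Phi(x)-\Phi(y)\leqslant\Phi'(y)(x-y)$, evaluated at $y=C_\mathcal{A}(t)$, and then exploits the level-set structure of the class $\mathcal{A}_{\,\widetilde{C}}$: on $A^+$ one has $C_\mathcal{A}(t)\geqslant\widetilde{C}$ and $C_b-C_\mathcal{A}\geqslant 0$, on $A^-$ one has $C_\mathcal{A}(t)\leqslant\widetilde{C}$ and $C_b-C_\mathcal{A}\leqslant 0$, so the monotonicity of $\Phi'$ lets one replace $\Phi'(C_\mathcal{A}(t))$ by the constant $\Phi'(\widetilde{C})$ in both integrals, after which the isoperimetric constraint kills the remaining term. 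This is why the paper needs the threshold $\widetilde{C}$ and the companion Lemma~\ref{lem2} asserting that every admissible control lies in some $\mathcal{A}_{\,\widetilde{C}}$. You instead use the chord form of concavity: $\Phi\geqslant L$ on $[C_{min},C_{max}]$ with equality at the endpoints, $L$ affine, so $\int\Phi(C_\mathcal{A})\geqslant\int L(C_\mathcal{A})=\tau L(\overline{C})=\int\Phi(C_b)$ for any bang-bang $C_b$ with the prescribed measure of $A^-$. This bypasses the class $\mathcal{A}_{\,\widetilde{C}}$ entirely and, as you correctly note, proves the stronger statement $J[C_b]\leqslant J[C_{A_0}]$ for \emph{every} $C_{A_0}\in\mathcal{U}_{\,\tau}$, rendering Lemma~\ref{lem2} unnecessary for the proof of part~3) of Theorem~\ref{th1}. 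Your version is shorter, avoids differentiability of $\Phi$ altogether, and decouples the choice of $A^-$ from the particular competitor $C_\mathcal{A}$; the paper's version, by contrast, produces a bang-bang control adapted to the level sets of $C_\mathcal{A}$, which is the structure it then reuses in the two-input setting of Lemma~\ref{lem3.4}, where the flow-rate weight $v(t)$ prevents the simple chord argument from closing on its own.
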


\begin{proof}
Consider an arbitrary function $C_\mathcal{A}(t)$ from the class $\mathcal{A}_{\,\widetilde{C}}$ with a fixed $\widetilde{C}\in(C_{min},C_{max})$ and define now the corresponding bang-bang control $c_b:[0,\tau)\to [C_{min},C_{max}]$:
\begin{equation}\label{bang3}
c_{b}(t)=
\begin{cases}
C_{min}, &\text{ if }t\in A^-,
\\ C_{max}, &\text{ if }t\in A^+,
\end{cases}
\end{equation}
where  the sets $A^+$, $A^-$ correspond to the class $\mathcal{A}_{\widetilde{C}}$. Using condition 4) from Definition~\ref{def3}, we conclude that the isoperimetric condition~\eqref{isoperim1} holds for the function $C_b=c_b^\tau$ (the $\tau$-periodic extension of $c_b$), so $C_b\in\mathcal{U}_{\,\tau}$.

Now using the isoperimetric condition \eqref{isoperim1} and the property of concave functions:
\begin{equation*}
\Phi(x)-\Phi(y)\leqslant  \Phi'(y)(x-y)\quad\forall\,x,y\in[C_{min},C_{max}],
\end{equation*}
we investigate the difference of costs $J[C_b]-J[C_\mathcal{A}]$:
\begin{equation*}
\begin{aligned}
&J[C_b]-J[C_\mathcal{A}]=\frac v\tau\int_0^\tau \left(\Phi(C_b(t))-\Phi(C_\mathcal{A}(t))\right)dt 
\\&\leqslant\frac v\tau\int_0^\tau\Phi'(C_\mathcal{A}(t))\left(C_b(t)-C_\mathcal{A}(t)\right)dt
\\&=\frac v\tau\int_{A^+}\Phi'(C_\mathcal{A}(t))\left(C_{max}-C_\mathcal{A}(t)\right)dt+\frac v\tau\int_{A^-}\Phi'(C_\mathcal{A}(t))\left(C_{min}-C_\mathcal{A}(t)\right)dt
\\&\leqslant\frac v\tau\int_{A^+}\Phi'(\widetilde{C})\left(C_{max}-C_\mathcal{A}(t)\right)dt+\frac v\tau\int_{A^-}\Phi'(\widetilde{C})\left(C_{min}-C_\mathcal{A}(t)\right)dt
\\&=\frac v\tau\Phi'(\widetilde{C})\int_0^\tau\left(C_{b}(t)-C_\mathcal{A}(t)\right)dt=0.
\end{aligned}
\end{equation*}
The obtained estimate proves Lemma~\ref{lem1}.
\end{proof}

\begin{lemma}\label{lem2}
For any function $u(\cdot)\in\mathcal{U}_{\,\tau}$, there exists a constant \\$\widetilde{C}\in(C_{min},C_{max})$ such that   $u(\cdot)\in\mathcal{A}_{\,\widetilde{C}}$.
\end{lemma}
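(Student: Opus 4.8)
\emph{Proof plan.} The plan is a layer-cake (bathtub) argument: I would look for a threshold level $\widetilde C$ such that $u$ can be split into a part where it lies (essentially) above $\widetilde C$ and a part where it lies below, the ``below'' part having exactly the Lebesgue measure $\mu_-:=\frac{C_{max}-\overline C}{C_{max}-C_{min}}\tau$ prescribed by condition 4) of Definition~\ref{def3}. Concretely, I would introduce the distribution functions $m(c)=\mu(\{t\in[0,\tau):u(t)<c\})$ and $M(c)=\mu(\{t\in[0,\tau):u(t)\le c\})$ for $c\in[C_{min},C_{max}]$; both are nondecreasing, $m$ is left-continuous and $M$ right-continuous, with $m\le M$, $m(C_{min})=0$, $M(C_{max})=\tau$, and $M(c)-m(c)=\mu(\{u=c\})$. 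Then I would set
\[
\widetilde C:=\inf\{c\in[C_{min},C_{max}]:M(c)\ge\mu_-\},
\]
which is well defined since $M(C_{max})=\tau\ge\mu_-$. By right-continuity of $M$ the defining set equals $[\widetilde C,C_{max}]$, so $M(\widetilde C)\ge\mu_-$; and since $M(c)<\mu_-$ for every $c<\widetilde C$, passing to the limit yields $m(\widetilde C)=\lim_{c\uparrow\widetilde C}M(c)\le\mu_-$ (and trivially $m(C_{min})=0\le\mu_-$ if $\widetilde C=C_{min}$). Hence $m(\widetilde C)\le\mu_-\le M(\widetilde C)$.

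With such a $\widetilde C$ in hand the construction of $A^\pm$ is routine. I would write $[0,\tau)=E_-\cup E_0\cup E_+$ with $E_-=\{u<\widetilde C\}$, $E_0=\{u=\widetilde C\}$, $E_+=\{u>\widetilde C\}$, so that $\mu(E_-)=m(\widetilde C)$ and $\mu(E_-)+\mu(E_0)=M(\widetilde C)$. Since $0\le\mu_--m(\widetilde C)\le\mu(E_0)$ and Lebesgue measure on $[0,\tau)$ is nonatomic, I would pick a measurable $E\subseteq E_0$ with $\mu(E)=\mu_--m(\widetilde C)$ (for instance $E=E_0\cap[0,s)$ with $s$ supplied by the intermediate value theorem applied to $s\mapsto\mu(E_0\cap[0,s))$). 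Setting $A^-=E_-\cup E$ and $A^+=[0,\tau)\setminus A^-=E_+\cup(E_0\setminus E)$, one gets $\mu(A^-)=m(\widetilde C)+(\mu_--m(\widetilde C))=\mu_-$, $\mu(A^+\cap A^-)=0$, $\mu(A^+\cup A^-)=\tau$, while $u\le\widetilde C$ on $A^-$ and $u\ge\widetilde C$ on $A^+$; combined with the hypothesis $u\in\mathcal U_\tau$ this is exactly $u\in\mathcal A_{\widetilde C}$, \emph{provided} $\widetilde C\in(C_{min},C_{max})$.

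Securing the strict interiority of $\widetilde C$ is, I expect, the only genuine obstacle, and it is where the isoperimetric constraint~\eqref{isoperim1} enters. From $\overline C\tau=\int_0^\tau u\,dt$ and $u\le C_{max}$ one obtains $\mu(\{u=C_{min}\})\le\mu_-$, and symmetrically (using $u\ge C_{min}$) $\mu(\{u=C_{max}\})\le\tau-\mu_-$. Now if it happened that $\widetilde C=C_{min}$, then $M(C_{min})=\mu(\{u=C_{min}\})\ge\mu_-$, forcing $\mu(\{u=C_{min}\})=\mu_-$; equality in the estimate above means $u=C_{max}$ a.e. on $\{u>C_{min}\}$, i.e. $u$ is itself a bang-bang control taking only the values $C_{min}$ and $C_{max}$, with $\{u=C_{min}\}$ of measure $\mu_-$. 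The possibility $\widetilde C=C_{max}$ leads symmetrically to the same conclusion. But for such a bang-bang $u$ the lemma is immediate: $A^-=\{u=C_{min}\}$, $A^+=\{u=C_{max}\}$ satisfy Definition~\ref{def3} for \emph{any} $\widetilde C\in(C_{min},C_{max})$, e.g. $\widetilde C=(C_{min}+C_{max})/2$. Thus one is always in one of two situations: either $\widetilde C\in(C_{min},C_{max})$ and the construction of the second paragraph applies, or $u$ is bang-bang and any interior level works. (The degenerate cases $\overline C=C_{min}$ or $\overline C=C_{max}$, which force $u$ to be constant a.e., are covered by the bang-bang case as well.) Everything beyond this endpoint bookkeeping is the standard layer-cake reasoning together with nonatomicity of Lebesgue measure.
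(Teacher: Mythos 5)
Your proposal is correct, and it takes a genuinely different route from the paper. The paper argues by contradiction: it assumes $u$ lies in no class $\mathcal{A}_{\widetilde C}$, translates this into the alternative \eqref{mes+}/\eqref{mes-} holding for every $\widetilde C$, upgrades \eqref{mes+} to the statement that $\mu(\{u\geqslant C_{max}-\delta\})>\mu_+$ for all $\delta>0$ with $u=C_{max}$ a.e.\ on that set, and then shows the mean value of $u$ would strictly exceed $\overline C$, violating \eqref{isoperim1}. Your argument is instead direct and constructive: you locate the threshold $\widetilde C$ as an infimum of the distribution function $M$, verify $m(\widetilde C)\leqslant\mu_-\leqslant M(\widetilde C)$, and build $A^\pm$ explicitly by splitting the level set $\{u=\widetilde C\}$ using nonatomicity of Lebesgue measure; the isoperimetric constraint enters only to rule out the threshold landing at an endpoint, where you show $u$ must be bang-bang and any interior level works. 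What your approach buys is completeness: it supplies the details the paper leaves implicit --- in particular, the paper's passage from the pointwise alternative ``\eqref{mes+} or \eqref{mes-} for each $\widetilde C$'' to a uniform case distinction, and its assertion that ``the only function which can satisfy the above statement'' is essentially constant on $A^+_\delta$, are stated without justification, whereas your monotone threshold construction and explicit equality analysis close exactly these gaps. What the paper's route buys is brevity and a single unified mechanism (violation of \eqref{isoperim1}) driving the whole contradiction. Both proofs rest on the same underlying characterization of membership in $\mathcal{A}_{\widetilde C}$ via the measures of the super- and sub-level sets, so the difference is one of architecture rather than of substance.
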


\begin{proof}
Denote the values 
\begin{equation*}
\mu_{+}:=\frac{\overline{C}-C_{min}}{C_{max}-C_{min}}\tau,
\qquad \mu_{-}:=\frac{C_{max}-\overline{C}}{C_{max}-C_{min}}\tau.
\end{equation*}
It is clear that $\mu_{+}+\mu_{-}=\tau$.

Assume that there exists a function $u\in\mathcal{U}_{\,\tau}$ which does not belong to any class $\mathcal{A}_{\,\widetilde{C}}$.  Due to Definition~\ref{def3}, this means that, for any $\widetilde{C}\in(C_{min},C_{max})$, either
\begin{equation}\label{mes+}
    \mu(\{t\in[0,\tau):u(t)>\widetilde{C}\})>\mu_{+},
\end{equation}
or
\begin{equation}\label{mes-}
    \mu(\{t\in[0,\tau):u(t)<\widetilde{C}\})>\mu_{-}.
\end{equation}

Consider the case \eqref{mes+} and rewrite this statement in the following way. For any arbitrary small $\delta>0$, the following inequality always holds:
\begin{equation}\label{mes+_delta}
    \mu(A^+_\delta)>\mu_{+},\text{ where }A^+_\delta:=\{t\in[0,\tau):u(t)\geqslant C_{max}-\delta\}.
\end{equation}
It is easy to prove that the only function which can satisfy the above statement is constant, namely, $u(t)=C_{max}$ for $\mu$-a.a. $t\in A^+_\delta$. Now we calculate the mean value of the function $u$:
\begin{align*}
&\frac 1\tau\int_0^\tau u(t)\,dt=\frac 1\tau\int_{A^+_\delta} u(t)\,dt+\frac 1\tau\int_{[0,\tau)\setminus A^+_\delta} u(t)\,dt
\\&\geqslant 
C_{max}\frac{|A^+_\delta|}\tau+C_{min}\frac{\tau-|A^+_\delta|}\tau
=(C_{max}-C_{min})\frac{|A^+_\delta|}\tau+C_{min}
\\&>(C_{max}-C_{min})\frac{\mu_{+}}\tau+C_{min}=\overline{C}.
\end{align*}
Thus we get that the isoperimetric constraint \eqref{isoperim1} is violated, so $u\notin\mathcal{U}_{\,\tau}$, which contradicts our assumption.

Using the same arguments, one can prove that $\frac 1\tau\int_0^\tau u(t)\,dt<\overline{C}$ in the case \eqref{mes-}.
\end{proof}

\begin{proof}[Proof of  Theorem~\ref{th1}.]

For the case $n=1$, evaluating directly the value of the cost functional for solution \eqref{solution11}, we get:
\begin{equation*}
\begin{aligned}
J&=\frac v\tau\int_0^\tau C_A(L,t)dt= \frac v\tau\int_0^\tau C_{A_0}\left(t-\frac Lv\right)e^{-\frac kvL}dt
\\&=e^{-\frac kvL}\frac v\tau\int_0^\tau C_{A_0}(t)dt=
\frac v\tau e^{-\frac{kL}v}\overline{C}
\end{aligned}
\end{equation*}
for any admissible $\tau$-periodic control function $C_{A_0}(t)$.

In the case $n<1$, it follows from \eqref{Phi''} that $\Phi$ is a convex function, provided that $C_{min}>\left(\frac{v}{kL(1-n)}\right)^{-\frac1{1-n}}$. Using Jensen's inequality for convex functions, we get
\begin{equation*}
\Phi(\overline{C})<\frac1\tau\int_0^\tau \Phi(C_{A_0}(t)) dt
\end{equation*}
for each non-negative Lebesgue-integrable function $C_{A_0}$, which proves the second statement of the theorem.

For the case $n>1$, we have the opposite Jensen's inequality which means that any non-negative Lebesgue--integrable periodic function $C_{A_0}$, which satisfied the constraints of Problem~\ref{prob1}, ensures a better performance in the sense of the functional $J$ in comparison with the steady-state control $\overline{C}$ (see also \cite{Grab1985}).  Due to Lemmas~\ref{lem1}, \ref{lem2}, the bang-bang strategy is the optimal control in terms of Problem~\ref{prob1}. 
\end{proof}

\begin{remark}\label{freq1}
We note that the number of switchings of control function \eqref{bang1} is not important in the case $n>1$, as the function $\Phi$ does not depend on time $t$ explicitly. So, there is a class of bang-bang controls which are equivalent in the sense of minimizing the cost functional $J[C_{A_0}]$. A~simple representative of this class is 
$C_{A_0}=c^\tau\in {\cal U}_\tau$ -- the $\tau$-periodic extension of the control $c:[0,\tau)\to [C_{min},C_{max}]$ with one switching of the following form:
\begin{equation*}
c(t)=
\begin{cases}
C_{min}, &\text{ if }t\in [0,\tau^*),
\\ C_{max}, &\text{ if }t\in [\tau^*,\tau),
\end{cases}
\end{equation*}
where $\tau^*=\frac{C_{max}-\overline{C}}{C_{max}-C_{min}}\tau$.
\end{remark}

\section{Plug flow reactor model considering a controlled flow-rate}\label{PFR2}

In this section, we investigate the mathematical model of PFR with a time-varying flow-rate $v(t)$:
\begin{equation}\label{eq2}
\frac{\partial C_A}{\partial t}+v(t)\frac{\partial C_A}{\partial x}+kC_A^n=0,\qquad (x,t)\in\Omega = [0,L]\times\mathbb{R},
\end{equation}
\begin{equation}\label{bound2}
C_A(0,t)=C_{A_0}(t).
\end{equation}
For this model, we are interested in the following optimal control problem: 
\begin{problem}\label{prob2}
Given positive constants $\tau$, $C_{min}<C_{max}$, $v_{min}<v_{max}$, $\overline{C}\in[C_{min},C_{max}]$, and 
$\overline{v}\in[v_{min},v_{max}]$, find $\tau$-periodic measurable controls $\hat C_{A_0}:\mathbb{R}\rightarrow[C_{min},C_{max}]$ and $\hat v:\mathbb{R}\rightarrow[v_{min},v_{max}]$ that minimize the cost
\begin{equation}\label{cost2}
J=\frac1\tau\int_0^\tau C_A(L,t)\,v(t)\,dt
\end{equation}
among all solutions $C_A(x,t)$ of the problem \eqref{eq2}, \eqref{bound2} corresponding to the class of admissible controls, i.e.
$\tau$-periodic measurable functions $C_{A_0}:\mathbb R\to [C_{min},C_{max}]$ and $v:\mathbb R\to [v_{min},v_{max}]$ that satisfy the isoperimetric constraint
\begin{equation}\label{isoperim2}
\frac1\tau\int_0^\tau C_{A_0}(t)v(t)dt=\overline{C}\,\overline{v}.
\end{equation}
\end{problem}
We will also consider the additional assumption:
\begin{equation}\label{assump2}
\int_0^\tau v(t)dt=L,
\end{equation}
which means that the residence time of the reaction is equal to $\tau$.
To ensure the isoperimetric condition \eqref{isoperim2}, we will assume that
\begin{equation}\label{assump2*}
C_{min}\leqslant\,\frac{\overline{C}\,\overline{v}\,\tau}{L}\leqslant C_{max}.
\end{equation}

We solve the problem \eqref{eq2}, \eqref{bound2} using the method of characteristics. Namely, we write the Lagrange equations to find the characteristics curves: 
\begin{align}
& \frac{dt}{ds}=1,\quad t(0,r)=r,\label{char2-1}
\\& \frac{dx}{ds}=v(t),\quad x(0,r)=0,\label{char2-2}
\\& \frac{dz}{ds}=-kz^n,\quad z(0,r)=C_{A_0}(r).\label{char2-3}
\end{align}
Here $t=t(s,r)$, $x=x(s,r)$ define the characteristic curve. 
Solving equations \eqref{char2-1} and \eqref{char2-2}, we get:
\begin{equation*}
\begin{aligned}
& t(s,r)=s+r,
\\& x(s,r)=V(s+r)-V(r),
\end{aligned}
\end{equation*}
where $V(t):=\int_0^tv(\xi)d\xi$ is a strictly increasing function due to the positivity of $v$. Thus, we can express $s$ and $r$ in terms of $x$ and $t$:
\begin{equation*}
\begin{aligned}
& s(x,t)=t-V^{-1}(V(t)-x),
\\& r(x,t)=V^{-1}(V(t)-x),
\end{aligned}
\end{equation*}
where the function $V^{-1}$ denotes the inverse to $V$.
Solving equation \eqref{char2-3} in the case $n\neq1$, we get the solution of the problem \eqref{eq2}, \eqref{bound2}:
\begin{equation}\label{solution2}
\begin{aligned}
z(s,r)&=\left[C_{A_0}(r)^{-(n-1)}+k(n-1)s\right]^{-\frac1{n-1}},
\\ C_A(x,t)&=\left[C_{A_0}\left(V^{-1}(V(t)-x)\right)^{-(n-1)}+k(n-1)\left(t-V^{-1}(V(t)-x)\right)\right]^{-\frac1{n-1}}.
\end{aligned}
\end{equation}
For the case $n=1$, the solution has the following form:
\begin{equation}\label{solution2*}
\begin{aligned}
& z(s,r)=C_{A_0}(r)e^{-ks},
\\& C_A(x,t)=C_{A_0}\left(V^{-1}(V(t)-x)\right)e^{-k\left(t-V^{-1}(V(t)-x)\right)}.
\end{aligned}
\end{equation}
Similarly to the previous study in Section~\ref{PFR}, we consider the obtained solutions as weak solutions of the problem~\eqref{eq2}, \eqref{bound2} from the class of measurable functions in the sense of the integral identity:
\begin{equation*}
\int_\Omega\left(C_A\frac{\partial\varphi}{\partial t}+vC_A\frac{\partial\varphi}{\partial x}+kC_A^n\varphi\right)dx\,dt=0,
\end{equation*}
for each smooth test function $\varphi\in C_0^\infty(\Omega)$ with compact support.

Now we give definitions of classes of admissible controls.

\begin{definition}
Let $\tau>0$, $C_{max}>C_{min}>0$, $v_{max}>v_{min}>0$, $L\in [v_{min}\tau,v_{max}\tau]$ and $\overline{C}\in [C_{min},C_{max}]$, $\overline{v}\in [v_{min},v_{max}]$ be given.
The class of admissible controls $\mathcal{V}_{\,\tau}$
consists of all locally measurable vector-functions $(c,v):\mathbb R\to [C_{min},C_{max}]\times[v_{min},v_{max}]$ such that $(c(t),v(t))$ is $\tau$-periodic and the isoperimetric conditions \eqref{isoperim2}, \eqref{assump2} hold.
\end{definition}


In solving the optimal control problem \ref{prob2}, we need the following definition of the class of bang-bang controls.

\begin{definition}\label{B_class}

The class $\mathcal{B}_{\tau}$ with given $\tau>0$ is a class of control vector-functions $(c,v)\in\mathcal{V}_{\,\tau}$ which define the bang-bang strategy with respect to the constraints of Problem~\ref{prob2}. Namely, the function $v$ has the following form:
\begin{equation}\label{v_opt}
v(t)=
\begin{cases}
v_{min}, &\text{ if }t\in B^-,
\\ v_{max} &\text{ if }t\in B^+,\text{ for } t\in [0,\tau),
\end{cases}
\end{equation}
where $B^-,B^+\subset [0,\tau)$, $\mu(B^+\cap B^-)=0$, $\mu(B^+\cup B^-)=\tau$, and $\mu(B^+)=\mu_v^+:=\frac{L-\tau v_{min}}{v_{max}-v_{min}}$, $\mu(B^-)=\mu_v^-:=\frac{\tau v_{max}-L}{v_{max}-v_{min}}$ are defined by assumption~\eqref{assump2}. 
Moreover, function $c$ is defined by the relation:
\begin{equation}\label{C_opt}
c(t)=
\begin{cases}
C_{max}, &\text{ if }t\in A^+,
\\ C_{min}, &\text{ if }t\in A^-,
\end{cases}
\end{equation}
where $A^+,A^-\subset [0,\tau)$, $\mu(A^+\cap A^-)=0$, $\mu(A^+\cup A^-)=\tau$, and $A^+\cap B^-$ has the maximum measure among all cases which are consisent with the isoperimetric constraint \eqref{isoperim2}. The measure of sets $A^\pm$ depends on the relation between the parameters of the problem. Namely, there are two possible cases:

\begin{itemize}
    \item[(i)] If 
\begin{align*}
\kappa:=&\,\tau v_{max}(\overline{C}\overline{v}-C_{max}v_{min})+\tau v_{min}(C_{min}v_{max}-\overline{C}\overline{v})+L(C_{max}v_{min}-C_{min}v_{max})\leqslant0
\end{align*}
then $\mu(A^+)\leqslant\mu(B^-)$ and $A^+\cap B^+=\emptyset$ (see Fig.1(a)). 
In this case, the measures of sets $A^\pm$ are as follows:
\begin{equation}
\mu(A^+)=\mu_c^+:=\frac{\tau\overline{C}\overline{v}-LC_{min}}{v_{min}(C_{max}-C_{min})},
\qquad \mu(A^-)=\mu_c^-:=\tau-\mu_c^+.
\end{equation}

    \item[(ii)] If $\kappa>0$ then $\mu(A^-)\leq\mu(B^+)$ and $A^-\cap B^-=\emptyset$ (see Fig.1(b)). In this case, the measures of sets $A^\pm$ are as follows:
\begin{equation}
\mu(A^-)=\mu_c^-:=\frac{LC_{max}-\tau\overline{C}\overline{v}}{v_{max}(C_{max}-C_{min})}\qquad \mu(A^+)=\mu_c^+:=\tau-\mu_c^-.
\end{equation}
    
\end{itemize}

\begin{figure}[H]
\centering
\subfloat[Case $(i)$]{
\scalebox{0.6}{\includegraphics{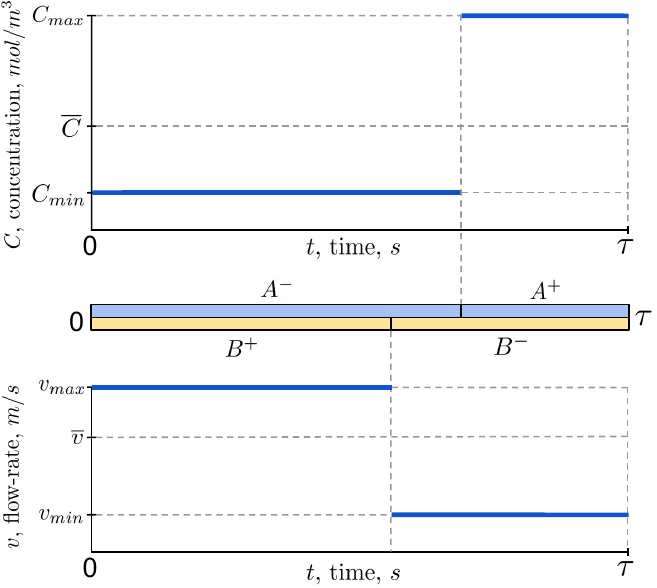}}
}
\hspace{0mm}
\subfloat[Case $(ii)$]{
\scalebox{0.6}{\includegraphics{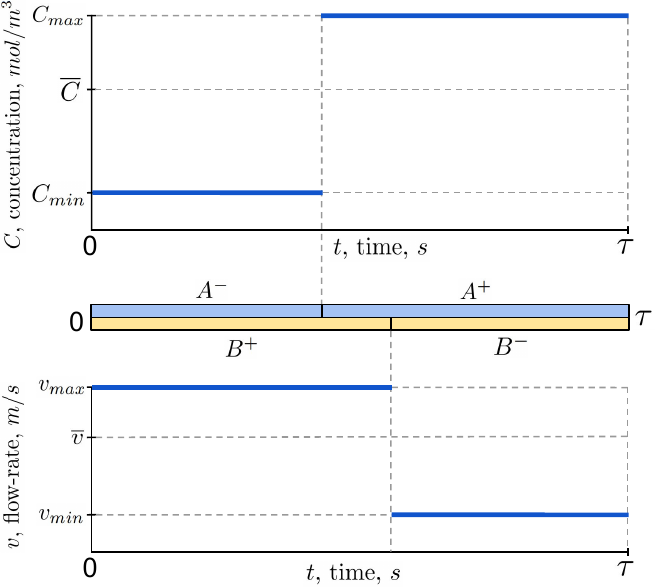}}
}
\caption{Illustration of possible cases described in Definition~\ref{B_class}.
}
\label{fig0}
\end{figure}

Note that all admissible controls from the class $\mathcal{B}_\tau$ are equivalent in the sense of Problem~\ref{prob2}, namely, the cost functional $J$ takes the same value for all controls $(c,v)\in\mathcal{B}_\tau$.
\end{definition}

The result of solving the isoperimetric optimal control problem (Problem~\ref{prob2}) is presented in the following theorem.

\begin{theorem}\label{th2}
Let assumptions~\eqref{assump2} and \eqref{assump2*}  be satisfied.
\begin{itemize}
\item[1)]If $n=1$, then controls $C_{A_0}$ and $v$ have no impact on the value of the cost functional $J$, namely,
\begin{equation*}
J[C_{A_0},v]=\overline{C}\,\overline{v}e^{-k\tau}\qquad\forall\,(C_{A_0},v)\in\mathcal{V}_\tau.
\end{equation*}

\item[2)]
If $n>1$, then the class of controls $\mathcal{B}_\tau$ is the optimal strategy for Problem~\ref{prob2}.
\end{itemize}
\end{theorem}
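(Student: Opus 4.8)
\textbf{Proof proposal for Theorem~\ref{th2}.}

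The plan is to mirror the structure of the proof of Theorem~\ref{th1}, adapting it to the two-input setting. First I would dispose of the case $n=1$ by direct computation. Using the weak solution~\eqref{solution2*} at $x=L$, note that $V(t)-L = V(r)$ forces $r=V^{-1}(V(t)-L)$, and assumption~\eqref{assump2} (that $V(t)-V(t-\tau)=L$, which follows from $\tau$-periodicity of $v$ and $\int_0^\tau v=L$) gives $V^{-1}(V(t)-L)=t-\tau$ and hence $t-r=\tau$ along every characteristic reaching the outlet. Therefore $C_A(L,t)=C_{A_0}(t-\tau)e^{-k\tau}$, so $C_A(L,t)v(t)=C_{A_0}(t-\tau)v(t)e^{-k\tau}$; integrating over a period and shifting, together with the $\tau$-periodicity of both controls and the isoperimetric constraint~\eqref{isoperim2}, yields $J=\overline{C}\,\overline{v}\,e^{-k\tau}$ independently of $(C_{A_0},v)$. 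A subtle point to check here: after the shift one needs $\frac1\tau\int_0^\tau C_{A_0}(t-\tau)v(t)\,dt=\frac1\tau\int_0^\tau C_{A_0}(t)v(t)\,dt=\overline{C}\,\overline{v}$, which holds because the integrand is $\tau$-periodic.

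For $n>1$, the key observation is that the same residence-time identity $t-r=\tau$ holds along outlet-reaching characteristics, so from~\eqref{solution2} we get $C_A(L,t)=\Psi(C_{A_0}(t-\tau))$ with $\Psi(\xi)=\bigl[\xi^{-(n-1)}+k(n-1)\tau\bigr]^{-1/(n-1)}$; this is exactly the function $\Phi$ from~\eqref{cost1*} with $kL/v$ replaced by $k\tau$, hence $\Psi$ is increasing and strictly concave for $n>1$ by the computations~\eqref{Phi'}--\eqref{Phi''}. Thus the cost becomes
\begin{equation}\label{cost2-reduced}
J[C_{A_0},v]=\frac1\tau\int_0^\tau \Psi\bigl(C_{A_0}(t-\tau)\bigr)\,v(t)\,dt=\frac1\tau\int_0^\tau \Psi\bigl(C_{A_0}(t)\bigr)\,v(t)\,dt,
\end{equation}
where the last equality uses $\tau$-periodicity. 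Now I would introduce the change of the integrating measure $d\nu=v(t)\,dt$ on $[0,\tau)$; it has total mass $\int_0^\tau v\,dt=L$ by~\eqref{assump2}, and the constraint~\eqref{isoperim2} reads $\frac1L\int_0^\tau C_{A_0}\,d\nu=\frac{\overline{C}\,\overline{v}\,\tau}{L}=:\widehat{C}\in[C_{min},C_{max}]$ by~\eqref{assump2*}. So for \emph{fixed} $v$, minimizing $\frac1\tau\int\Psi(C_{A_0})\,d\nu$ over $C_{A_0}$ with $\nu$-mean equal to $\widehat C$ is precisely the single-input problem of Lemma~\ref{lem1}--Lemma~\ref{lem2} with the measure $\nu$ in place of Lebesgue measure: the argument there — writing $C_{A_0}\in\mathcal A_{\widetilde C}$ for a suitable level $\widetilde C$, replacing it by the bang-bang control taking $C_{max}$ on $A^+$ and $C_{min}$ on $A^-$, and using concavity $\Psi(x)-\Psi(y)\le\Psi'(y)(x-y)$ together with monotonicity of $\Psi'$ — goes through verbatim since it never used any structural property of Lebesgue measure beyond $\sigma$-additivity. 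This shows the optimal $C_{A_0}$ is bang-bang between $C_{min},C_{max}$.

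It then remains to optimize over $v$. With $C_{A_0}$ bang-bang, partition $[0,\tau)=A^+\cup A^-$ (the sets where $C_{A_0}=C_{max}$, resp. $C_{min}$) and write $J=\frac1\tau\bigl(\Psi(C_{max})\int_{A^+}v\,dt+\Psi(C_{min})\int_{A^-}v\,dt\bigr)$; since $\Psi(C_{max})>\Psi(C_{min})$, and since $\int_{A^+}v+\int_{A^-}v=L$ is fixed while the constraint~\eqref{isoperim2} pins $C_{max}\int_{A^+}v+C_{min}\int_{A^-}v = \overline{C}\,\overline{v}\,\tau$, the two quantities $\int_{A^+}v\,dt$ and $\int_{A^-}v\,dt$ are in fact \emph{both determined} by the constraints — so $J$ is the same for every admissible $v$ once the partition masses are fixed! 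Hence any $v$ consistent with the prescribed $\mu(A^\pm)$ is optimal, and in particular the bang-bang $v$ of~\eqref{v_opt} is. The two cases (i)/(ii) in Definition~\ref{B_class} arise because one must still choose $\mu(A^+)$ compatibly: given the bang-bang $v$ with prescribed $\mu(B^\pm)$, the requirement $C_{max}\int_{A^+}v+C_{min}\int_{A^-}v=\overline C\overline v\tau$ forces a specific value of $\int_{A^+}v\,dt$, and depending on the sign of $\kappa$ this target is achievable either with $A^+\subset B^-$ (case (i), giving $\mu(A^+)=\mu_c^+$ via $\int_{A^+}v=v_{min}\mu(A^+)$) or with $A^-\subset B^+$ (case (ii)); the boundary $\kappa=0$ separates the two geometries, and~\eqref{assump2*} guarantees feasibility in both.

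\textbf{Main obstacle.} The routine parts are the $n=1$ computation and the transfer of Lemmas~\ref{lem1}--\ref{lem2} to the measure $\nu=v\,dt$. The delicate point is handling the \emph{joint} optimization: one must argue carefully that after reducing to bang-bang $C_{A_0}$ the value of $J$ depends on $v$ only through the two integrals $\int_{A^\pm}v\,dt$, that these are fixed by the linear constraints, and — most importantly — that there actually \emph{exists} an admissible pair realizing the reduction, i.e.\ that the nested bang-bang structure of Definition~\ref{B_class} is simultaneously compatible with $\int_0^\tau v=L$, with~\eqref{isoperim2}, and with $C_{A_0}$ being bang-bang; this feasibility bookkeeping, and the resulting case split on the sign of $\kappa$, is where the argument needs the most care. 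One should also double-check that the reduction $C_A(L,t)=\Psi(C_{A_0}(t-\tau))$ is valid as a weak solution for merely measurable controls, i.e.\ that~\eqref{solution2} indeed satisfies the integral identity — but this was already asserted in the text preceding the theorem, so it may be invoked.
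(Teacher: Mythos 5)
Your proposal is correct, and for part 2) it takes a genuinely different route from the paper. Part 1) coincides with the paper's argument: the identity $V^{-1}(V(t)-L)=t-\tau$ is exactly the content of Lemma~\ref{lemma_cost}, after which \eqref{isoperim2} gives the value directly. For $n>1$ the paper compares an arbitrary admissible pair $(C_\mathcal{A},v)$ with a reference pair $(C_b,v_b)\in\mathcal{B}_\tau$ whose sets $A^\pm$ have prescribed \emph{Lebesgue} measure $\mu_c^\pm$ (Definition~\ref{def3.1}, Lemmas~\ref{lem3.4} and~\ref{lem3.5}), and splits $J[C_b,v_b]-J[C_\mathcal{A},v]=I_1+I_2$, where $I_2\leqslant0$ by concavity and $I_1\leqslant0$ requires the case analysis on the sign of $\kappa$; with that choice of $A^\pm$ the intermediate pair $(C_b,v)$ need not satisfy \eqref{isoperim2}, which is why $I_1$ must be estimated rather than computed. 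You instead freeze $v$, run the single-input argument in the weighted measure $v\,dt$ (so your sets $A^\pm$ have prescribed $\nu$-measure and your intermediate pair $(C_b,v)$ \emph{is} admissible), and then observe that for bang-bang $C_{A_0}$ the two linear constraints $\int_{A^+}v+\int_{A^-}v=L$ and $C_{max}\int_{A^+}v+C_{min}\int_{A^-}v=\overline{C}\,\overline{v}\,\tau$ determine both integrals, so $J$ no longer depends on $v$ at all. This makes the analogue of $I_1$ vanish identically, avoids the $\kappa$ case split in the comparison step, yields the optimal value in closed form, and in passing \emph{proves} the paper's unproved assertion at the end of Definition~\ref{B_class} that all controls in $\mathcal{B}_\tau$ give the same cost. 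What the paper's version buys is that all bookkeeping stays in Lebesgue measure, matching Definition~\ref{B_class} verbatim. Both arguments carry the same residual obligation, which you correctly flag as the delicate point: one must verify that $\mathcal{B}_\tau\neq\emptyset$, i.e., that the nested sets with measures $\mu_c^\pm$, $\mu_v^\pm$ in cases (i)/(ii) are simultaneously consistent with \eqref{assump2} and \eqref{isoperim2}; the paper asserts this in the proof of Lemma~\ref{lem3.4} (``a control function $v_b$ can be constructed'') without detail, and you would still need to carry out that feasibility check to conclude.
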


To prove this theorem we need auxiliary lemmas.

\begin{lemma}\label{lemma_period}
If the functions $(C_{A_0},v)\in\mathcal{V}_\tau$, then the corresponding solution $C(x,t)$ of the problem~\eqref{eq2}, \eqref{bound2}  is also $\tau$-periodic with respect to $t$.
\end{lemma}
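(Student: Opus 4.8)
The plan is to exploit the explicit formula for the weak solution given in \eqref{solution2} (or \eqref{solution2*} when $n=1$) together with the periodicity of the data. First I would recall that $(C_{A_0},v)\in\mathcal V_\tau$ means both $C_{A_0}$ and $v$ are $\tau$-periodic and that $\int_0^\tau v(\xi)\,d\xi=L$ by assumption \eqref{assump2}. From the definition $V(t)=\int_0^t v(\xi)\,d\xi$ and $\tau$-periodicity of $v$, the function $V$ satisfies the cocycle-type identity $V(t+\tau)=V(t)+L$ for all $t\in\mathbb R$; this is the arithmetic heart of the argument and follows by splitting $\int_0^{t+\tau}=\int_0^t+\int_t^{t+\tau}$ and using periodicity to identify $\int_t^{t+\tau}v=\int_0^\tau v=L$.

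Next I would transfer this shift property to the inverse function $V^{-1}$. Since $v>0$, $V$ is a strictly increasing bijection of $\mathbb R$, and the identity $V(t+\tau)=V(t)+L$ dualizes to $V^{-1}(y+L)=V^{-1}(y)+\tau$ for all $y\in\mathbb R$: indeed, apply $V^{-1}$ to $V(t+\tau)=V(t)+L$ and set $y=V(t)$. With this in hand, I would evaluate $C_A(x,t+\tau)$ from \eqref{solution2}. The argument $V^{-1}(V(t+\tau)-x)$ becomes $V^{-1}(V(t)+L-x)=V^{-1}(V(t)-x)+\tau=r(x,t)+\tau$. Then $C_{A_0}$ evaluated at that point equals $C_{A_0}(r(x,t))$ by $\tau$-periodicity of the control, and the ``time-elapsed'' term $t+\tau-V^{-1}(V(t+\tau)-x)=t+\tau-(r(x,t)+\tau)=t-r(x,t)=s(x,t)$ is unchanged. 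Substituting back into \eqref{solution2} gives $C_A(x,t+\tau)=C_A(x,t)$ for every $(x,t)\in\Omega$; the case $n=1$ is identical using \eqref{solution2*}.

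The only real subtlety — and the step I expect to need the most care — is the handling of measurability and the almost-everywhere nature of the weak solution: one should note that $V$ is absolutely continuous and strictly monotone so $V^{-1}$ is well-defined and continuous, hence the composition $t\mapsto V^{-1}(V(t)-x)$ is measurable, and the periodicity identity for $C_{A_0}$ (which a priori holds only for a.e.\ argument) is preserved under this composition because $V^{-1}$ maps null sets to null sets (being strictly increasing and, in fact, with the chain rule, mapping the relevant sets appropriately). Apart from this bookkeeping, the proof is a direct substitution, so I would keep the write-up short: establish $V(t+\tau)=V(t)+L$, deduce $V^{-1}(y+L)=V^{-1}(y)+\tau$, and plug into the solution formulas.
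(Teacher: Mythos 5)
Your proof is correct and follows essentially the same route as the paper: both exploit the explicit characteristics formula together with the shift property of $V$ induced by the $\tau$-periodicity of $v$ and the normalization $\int_0^\tau v\,dt=L$. Your packaging via the identities $V(t+\tau)=V(t)+L$ and $V^{-1}(y+L)=V^{-1}(y)+\tau$ is in fact slightly cleaner and more complete, since it yields $C_A(x,t+\tau)=C_A(x,t)$ for every $t$, whereas the paper's computation only explicitly compares $C_A(x,\tau)$ with $C_A(x,0)$ via an auxiliary parameter $s$ with $x=-V(-s)$.
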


\begin{proof}
Since $v$ is a $\tau$-periodic function, we get for any $s\in[0,\tau]$:
\begin{equation*}
V(\tau)=\int_0^\tau v(\xi)d\xi=\int_{-s}^{\tau-s}v(\xi)d\xi=-V(-s)+V(\tau-s).
\end{equation*}
Under  assumption \eqref{assump2} and the monotonicity of $V$, for each $x\in[0,L]$ one can find an $s\in[0,\tau]$ such that $x=-V(-s)$. Using this fact and the $\tau$--periodicity of the functions $C_{A_0}$ and $v$, we get:
\begin{align*}
C_A(x,\tau)&=\left[C_{A_0}\left(V^{-1}(V(\tau)-x)\right)^{-(n-1)}+k(n-1)\left(\tau-V^{-1}(V(\tau)-x)\right)\right]^{-\frac1{n-1}}
\\& =\left[C_{A_0}\left(V^{-1}(V(\tau-s))\right)^{-(n-1)}+k(n-1)\left(\tau-V^{-1}(V(\tau-s))\right)\right]^{-\frac1{n-1}}
\\& =\left[C_{A_0}(\tau-s)^{-(n-1)}+k(n-1)s\right]^{-\frac1{n-1}}
\\& =\left[C_{A_0}\left(-V^{-1}(-x)\right)^{-(n-1)}+k(n-1)\left(-V^{-1}(-x)\right)\right]^{-\frac1{n-1}}
\\& =C_A(x,0).
\end{align*}
\end{proof}

\begin{lemma}\label{lemma_cost}
Let the control functions $(C_{A_0},v)\in\mathcal{V}_\tau$ then the cost functional \eqref{cost2} of Problem~\ref{prob2} can be rewritten as follows:
\begin{align}
& J[C_{A_0},v]=\frac1\tau\int_0^\tau C_{A_0}(t)e^{-k\tau}v(t)dt,\quad\text{if } n=1\label{cost2**},
\\&J[C_{A_0},v]=\frac1\tau\int_0^\tau \Psi(C_{A_0}(t))v(t)dt,\quad\text{if } n\neq1,\label{cost2*}
\end{align}
where $\Psi(\xi):=\left(\xi^{-(n-1)}+k(n-1)\tau\right)^{-\frac1{n-1}}$ is an increasing concave function in the case $n>1$.
\end{lemma}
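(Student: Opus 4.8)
The plan is to evaluate the weak solution~\eqref{solution2} (respectively \eqref{solution2*} when $n=1$) at $x=L$, to use assumption~\eqref{assump2} together with the $\tau$-periodicity of $v$ in order to show that the residence time along every characteristic reaching the outlet is exactly $\tau$, and then to invoke the $\tau$-periodicity of $C_{A_0}$ to simplify the integral in~\eqref{cost2}.

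First I would record the elementary identity
\[
V(t)-V(t-\tau)=\int_{t-\tau}^{t}v(\xi)\,d\xi=\int_0^\tau v(\xi)\,d\xi=L\qquad\text{for all }t\in\mathbb R,
\]
which combines the $\tau$-periodicity of $v$ with~\eqref{assump2}; since $v\geqslant v_{min}>0$ makes $V$ a strictly increasing bijection of $\mathbb R$, this gives $V^{-1}(V(t)-L)=t-\tau$. Substituting $x=L$ into~\eqref{solution2},~\eqref{solution2*} then makes the history shift $V^{-1}(V(t)-x)$ collapse to the constant shift $t\mapsto t-\tau$ and the decay depth $t-V^{-1}(V(t)-x)$ collapse to the constant $\tau$, so that
\[
C_A(L,t)=\bigl[C_{A_0}(t-\tau)^{-(n-1)}+k(n-1)\tau\bigr]^{-\frac1{n-1}}=\Psi\bigl(C_{A_0}(t-\tau)\bigr)\quad(n\neq1),
\]
and $C_A(L,t)=C_{A_0}(t-\tau)\,e^{-k\tau}$ when $n=1$. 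This is the only step that requires any care, and it is essentially the computation underlying Lemma~\ref{lemma_period}: the point is that~\eqref{assump2} forces the outlet residence time to equal $\tau$ \emph{pointwise in $t$}, not merely on average, which is what turns the convolution-type outlet concentration into a plain (shifted) image of the boundary control.

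It then remains to substitute these expressions into~\eqref{cost2} and to use that $C_{A_0}$ is $\tau$-periodic, so $C_{A_0}(t-\tau)=C_{A_0}(t)$ for a.e.\ $t$; this yields~\eqref{cost2**} and~\eqref{cost2*} at once. Finally, the monotonicity and concavity of $\Psi$ for $n>1$ follow by the same differentiation already carried out for $\Phi$ in~\eqref{Phi'}--\eqref{Phi''}, with $L/v$ replaced by $\tau$: one checks directly that $\Psi'(\xi)=\bigl(1+k(n-1)\tau\,\xi^{n-1}\bigr)^{-\frac{n}{n-1}}>0$ and $\Psi''(\xi)=-k(n-1)n\tau\,\xi^{n-2}\bigl(1+k(n-1)\tau\,\xi^{n-1}\bigr)^{-\frac{2n-1}{n-1}}<0$ for all $\xi>0$. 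No genuine obstacle arises; the argument is a change of variables plus a routine derivative computation.
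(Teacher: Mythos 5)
Your proposal is correct and follows essentially the same route as the paper: both establish $V^{-1}(V(t)-L)=t-\tau$ from assumption~\eqref{assump2} and the $\tau$-periodicity of $v$, substitute $x=L$ into~\eqref{solution2},~\eqref{solution2*}, shift by periodicity of $C_{A_0}$, and verify monotonicity and concavity of $\Psi$ by the same differentiation as in~\eqref{Phi'}--\eqref{Phi''}. Your explicit formulas for $\Psi'$ and $\Psi''$ are accurate; no gap.
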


\begin{proof}
Using assumption \eqref{assump2}, the $\tau$--periodicity of the function $v$ and the definition of function $V(t)$, we get
\begin{equation*}
V^{-1}(V(t)-L)=V^{-1}(V(t)-V(\tau))=V^{-1}(V(t-\tau))=t-\tau,
\end{equation*}
which allows us to rewrite the cost functional \eqref{cost2} in the case $n\neq1$ as follows:
\begin{equation*}
\begin{aligned}
J[C_{A_0},v]&=\frac1\tau\int_0^\tau \left[C_{A_0}\left(t-\tau\right)^{-(n-1)}+k(n-1)\left(t-(t-\tau))\right)\right]^{-\frac1{n-1}}v(t)dt
\\&=\frac1\tau\int_0^\tau \left[C_{A_0}(t)^{-(n-1)}+k(n-1)\tau\right]^{-\frac1{n-1}}v(t)dt,
\end{aligned}
\end{equation*}
provided that the function $C_{A_0}$ is $\tau$--periodic. Calculating the derivatives of the function $\Psi$ as it was done in \eqref{Phi'}, \eqref{Phi''}, we conclude that, if $n>1$, $\Psi$ is increasing and concave.

Similarly, under the conditions of Lemma~\ref{lemma_cost}, we obtain \eqref{cost2**} in the case $n=1$. \end{proof}

For further analysis, we introduce an auxiliary class of control functions.

\begin{definition}\label{def3.1}
A vector-function $(c,v):{\mathbb R}\to [C_{min},C_{max}]\times[v_{min},v_{max}]$ belongs to the class $\mathcal{A}_{\,\widetilde{C},\,\nu}$ for given constants $\widetilde{C}\in [C_{min},C_{max}]$, $\nu\in[0,\tau]$ if  $(c,v)\in {\cal V}_\tau$ and there exist Lebesgue-measurable sets $A^+\subset [0,\tau)$,  $A^-\subset [0,\tau)$ such that:
\begin{itemize}
    \item[1)] $\essinf_{t\in\mathbb{A}^+}c(t)\geqslant\widetilde{C}$;

    \item[2)] $\esssup_{t\in\mathbb{A}^-}c(t)\leqslant\widetilde{C}$;

    \item[3)] $\mu(A^+\cap A^-)=0$, $\mu(A^+\cup A^-)=\tau$; 
    
    \item[4)] $\mu(A^+)=\nu$.
    
\end{itemize}
\end{definition}

\begin{lemma}\label{lem3.4}
Let $n>1$ and let $(C_\mathcal{A},v)\in \mathcal{A}_{\,\widetilde{C},\,\mu_c^+}$ for some $\widetilde{C}\in(C_{min},C_{max})$, with $\mu_c^+$ from Definition~\ref{B_class}. Then there exist control functions $(C_b,v_b)\in {\cal B}_\tau$ such that
$$
J [C_b,v_b] \leqslant J [C_\mathcal{A},v]
$$
where the cost $J$ is defined in Problem~\ref{prob2}.
\end{lemma}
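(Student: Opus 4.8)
Proof proposal. The plan is to reduce the statement to a one–variable concavity estimate, evaluated against the weighted measure $v(t)\,dt$. This is the natural analogue of the argument in Lemma~\ref{lem1}, except that the tangent (subgradient) inequality used there must be replaced by the chord inequality for $\Psi$: because of the weight $v$ one cannot keep the flow rate fixed while replacing $C_\mathcal{A}$ by a bang--bang profile on $A^\pm$ (that would destroy the isoperimetric constraint~\eqref{isoperim2}), so a tangent-line argument at a fixed threshold does not close, whereas the chord bound needs no ordering structure at all. Concretely, I would first invoke Lemma~\ref{lemma_cost} to write $J[C,v]=\frac1\tau\int_0^\tau\Psi(C(t))\,v(t)\,dt$ with $\Psi$ increasing and concave on $[C_{min},C_{max}]$, and introduce the finite positive measure $d\nu=v(t)\,dt$ on $[0,\tau)$; by~\eqref{assump2} it has total mass $\nu([0,\tau))=L$, and~\eqref{isoperim2} reads $\int_0^\tau C\,d\nu=\tau\overline C\,\overline v$.

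The key step is the chord bound: since $\Psi$ is concave on $[C_{min},C_{max}]$, its graph lies above the chord through the endpoints, i.e.
\[
\Psi(\xi)\ \geq\ \Psi(C_{min})+\frac{\Psi(C_{max})-\Psi(C_{min})}{C_{max}-C_{min}}\,(\xi-C_{min}),\qquad \xi\in[C_{min},C_{max}].
\]
Taking $\xi=C_\mathcal{A}(t)$, integrating against $d\nu$, and inserting $\nu([0,\tau))=L$ and $\int_0^\tau C_\mathcal{A}\,d\nu=\tau\overline C\,\overline v$, one obtains
\[
J[C_\mathcal{A},v]\ \geq\ J^{*}:=\frac{\Psi(C_{max})\,(\tau\overline C\,\overline v-LC_{min})+\Psi(C_{min})\,(LC_{max}-\tau\overline C\,\overline v)}{\tau\,(C_{max}-C_{min})}.
\]
Note that this estimate uses neither the ordering conditions 1)--2) of Definition~\ref{def3.1} nor the precise value of $\mu_c^+$, so in fact $J[C,v]\geq J^{*}$ for every $(C,v)\in\mathcal{V}_\tau$.

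It then remains to exhibit a pair $(C_b,v_b)\in\mathcal{B}_\tau$ with $J[C_b,v_b]=J^{*}$. The hypothesis $\mu(A^+)=\mu_c^+$ together with~\eqref{assump2} and~\eqref{assump2*} determines which of the cases (i)/(ii) of Definition~\ref{B_class} occurs and makes the prescribed overlap of $A^\pm$ with $B^\mp$ feasible, so I would take $(C_b,v_b)$ exactly as in Definition~\ref{B_class}. Since $\Psi(C_b)\equiv\Psi(C_{max})$ on $A^+$ and $\equiv\Psi(C_{min})$ on $A^-$, writing $d\nu_b=v_b\,dt$ gives $\tau J[C_b,v_b]=\Psi(C_{max})\,\nu_b(A^+)+\Psi(C_{min})\,\nu_b(A^-)$; in case (i) the inclusion $A^+\subseteq B^-$ yields $\nu_b(A^+)=v_{min}\mu_c^+=(\tau\overline C\,\overline v-LC_{min})/(C_{max}-C_{min})$, while in case (ii) the inclusion $A^-\subseteq B^+$ yields $\nu_b(A^-)=v_{max}\mu_c^-=(LC_{max}-\tau\overline C\,\overline v)/(C_{max}-C_{min})$; together with $\nu_b(A^+)+\nu_b(A^-)=L$ both values are then forced in either case, whence $J[C_b,v_b]=J^{*}$, and the identity $C_{max}\nu_b(A^+)+C_{min}\nu_b(A^-)=\tau\overline C\,\overline v$ re-confirms $(C_b,v_b)\in\mathcal{V}_\tau$. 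Combining the two displays, $J[C_b,v_b]=J^{*}\leq J[C_\mathcal{A},v]$, which is the claim.

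I expect this last step to be the main obstacle: verifying that $\mathcal{B}_\tau$ is nonempty and well defined under~\eqref{assump2},~\eqref{assump2*}, and carefully tracking the $\nu_b$-measures of $A^\pm$ in the two cases so that the value of $J$ on $\mathcal{B}_\tau$ matches $J^{*}$ exactly. The weighted-measure bookkeeping, rather than the concavity estimate, is where the real care is required; the concavity estimate itself is a one-line consequence of $\Psi$ lying above its chord, just as in the proof of Lemma~\ref{lem1}.
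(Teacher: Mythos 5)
Your argument is correct, but it follows a genuinely different route from the paper's proof. The paper compares $(C_b,v_b)$ with $(C_\mathcal{A},v)$ directly, splitting $J[C_b,v_b]-J[C_\mathcal{A},v]=I_1+I_2$, bounding $I_1=\frac1\tau\int_0^\tau\Psi(C_b)(v_b-v)\,dt\leqslant 0$ via the case structure of Definition~\ref{B_class} and \eqref{assump2}, and bounding $I_2\leqslant 0$ with the tangent-line inequality $\Psi(x)-\Psi(y)\leqslant\Psi'(y)(x-y)$ at the threshold $\widetilde C$ exactly as in Lemma~\ref{lem1} --- so it genuinely uses the ordering conditions 1)--2) of Definition~\ref{def3.1}. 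You instead prove the absolute lower bound $J[C,v]\geqslant J^*$ for \emph{every} $(C,v)\in\mathcal{V}_\tau$ via the chord inequality for the concave $\Psi$ integrated against $d\nu=v\,dt$, using only $\nu([0,\tau))=L$ from \eqref{assump2} and $\int C\,d\nu=\tau\overline C\,\overline v$ from \eqref{isoperim2}, and then verify that the controls of $\mathcal{B}_\tau$ attain $J^*$; your $\nu_b$-bookkeeping in cases (i) and (ii) checks out, since $\nu_b(A^+)=v_{min}\mu_c^+=(\tau\overline C\,\overline v-LC_{min})/(C_{max}-C_{min})$ in case (i) and $\nu_b(A^-)=v_{max}\mu_c^-=(LC_{max}-\tau\overline C\,\overline v)/(C_{max}-C_{min})$ in case (ii), with $\nu_b(A^+)+\nu_b(A^-)=L$ forcing the other value. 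What your approach buys is significant: the hypothesis $(C_\mathcal{A},v)\in\mathcal{A}_{\widetilde C,\mu_c^+}$ becomes superfluous, so the covering result Lemma~\ref{lem3.5} is no longer needed to deduce Theorem~\ref{th2}, and you obtain the explicit optimal value $J^*$ as a by-product; what the paper's route buys is a direct pairwise comparison that parallels the single-input Lemma~\ref{lem1} and makes the role of the auxiliary classes $\mathcal{A}_{\widetilde C,\nu}$ transparent. Both arguments share the one genuine obstruction you correctly flag, namely that $\mathcal{B}_\tau$ is nonempty and consistent with \eqref{isoperim2} under \eqref{assump2}--\eqref{assump2*}; the paper also passes over this point lightly, so it is not a gap specific to your version, though your claim that the hypothesis $\mu(A^+)=\mu_c^+$ ``determines which of the cases (i)/(ii) occurs'' is slightly off --- the dichotomy is fixed by the sign of $\kappa$, i.e., by the problem data alone.
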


\begin{proof}
Consider an arbitrary vector-function $(C_\mathcal{A}(t),v(t))$ from the class $\mathcal{A}_{\,\widetilde{C},\,\mu_c^+}$ with a fixed $\widetilde{C}\in(C_{min},C_{max})$ and $\mu_c^+$ defined in Definition~\ref{B_class}.
Define now the corresponding bang-bang control $c_b:[0,\tau)\to [C_{min]},C_{max}]$:
\begin{equation}\label{bang4}
c_{b}(t)=
\begin{cases}
C_{min}, &\text{ if }t\in A^-,
\\ C_{max}, &\text{ if }t\in A^+,
\end{cases}
\end{equation}
where  the sets $A^+$, $A^-$ correspond to the class $\mathcal{A}_{\,\widetilde{C},\,\mu_c^+}$. 
Set $C_b=c_b^\tau$, the $\tau$-periodic extension of $c_b$. Due to Definition~\ref{def3.1}, $\mu(A^+)=\mu_c^+$ and $\mu(A^-)=\mu_c^-$, so a control function $v_b$ can be constructed, so that $(C_b,v_b)\in\mathcal{B}_\tau$.

Now using the isoperimetric conditions \eqref{isoperim2}, \eqref{assump2} and the  concavity property of the  function $\Psi$:
\begin{equation*}
\Psi(x)-\Psi(y)\leqslant  \Psi'(y)(x-y)\quad\forall\,x,y\in[C_{min},C_{max}],
\end{equation*}
we investigate the difference of costs $J[C_b,v_b]-J[C_\mathcal{A},v]$: 
\begin{equation*}
\begin{aligned}
&J[C_b,v_b]-J[C_\mathcal{A},v]=\frac1\tau\int_0^\tau \left(\Psi(C_b(t))v_b(t)-\Psi(C_\mathcal{A}(t))v(t)\right)dt 
\\&=\frac1\tau\int_0^\tau \Psi(C_b(t))(v_b(t)-v(t))dt
+\frac1\tau\int_0^\tau \left(\Psi(C_b(t))-\Psi(C_\mathcal{A}(t))\right)v(t)dt
\\&=:I_1+I_2
\end{aligned}
\end{equation*}
Now we estimate the integral $I_1$ in cases $(i)$ and $(ii)$ (see Definition~\ref{B_class}) separately. In case $(i)$, due to assumption \eqref{assump2}, we have:
\begin{equation*}
\begin{aligned}
I_1=&\frac1\tau\int_0^\tau \Psi(C_b(t))(v_b(t)-v(t))dt
=\frac1\tau\int_{A^+} \Psi(C_{max})(v_{min}-v(t))dt
\\&+\frac1\tau\int_{A^-\cap B^-} \Psi(C_{min})(v_{min}-v(t))dt+ \frac1\tau\int_{B^+} \Psi(C_{min})(v_{max}-v(t))dt
\\\leqslant & \frac1\tau\int_0^\tau \Psi(C_{min})(v_b(t)-v(t))dt=0
\end{aligned}
\end{equation*}
In case $(ii)$, by similar logic, we have:
\begin{equation*}
\begin{aligned}
I_1=&\frac1\tau\int_0^\tau \Psi(C_b(t))(v_b(t)-v(t))dt
=\frac1\tau\int_{A^+} \Psi(C_{max})(v_{min}-v(t))dt
\\&+\frac1\tau\int_{A^+\cap B^+} \Psi(C_{max})(v_{max}-v(t))dt+ \frac1\tau\int_{B^+} \Psi(C_{min})(v_{max}-v(t))dt
\\\leqslant &\frac1\tau\int_0^\tau \Psi(C_{max})(v_b(t)-v(t))dt=0
\end{aligned}
\end{equation*}
The integral $I_2$ can be estimated similarly as in the proof of Lemma \ref{lem1}:
\begin{equation*}
\begin{aligned}
I_2&=\frac1\tau\int_0^\tau \left(\Psi(C_b(t))-\Psi(C_\mathcal{A}(t))\right)v(t)dt
\leqslant\frac 1\tau\int_0^\tau\Psi'(C_\mathcal{A}(t))\left(C_b(t)-C_\mathcal{A}(t)\right)v(t)dt
\\&\leqslant\frac 1\tau\Psi'(\widetilde{C})\int_0^\tau\left(C_{b}(t)-C_\mathcal{A}(t)\right)v(t)dt=0.
\end{aligned}
\end{equation*}
The obtained estimates prove Lemma~\ref{lem3.4}.
\end{proof}

\begin{lemma}\label{lem3.5}
For any vector-function $(u(\cdot),v(\cdot))\in\mathcal{V}_{\,\tau}$, there exists a constant $\widetilde{C}\in(C_{min},C_{max})$ such that   $(u(\cdot),v(\cdot))\in\mathcal{A}_{\,\widetilde{C},\mu_c^+}$, where $\mu_c^+$ is from Definition~\ref{B_class}.
\end{lemma}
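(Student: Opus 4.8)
The statement is the two–input counterpart of Lemma~\ref{lem2}, and the plan is to prove it along the same lines, adapting the distribution–function argument to the $v$–weighted isoperimetric constraint. First I would reformulate membership in $\mathcal{A}_{\,\widetilde C,\mu_c^+}$ purely in terms of the level sets of $u$. Since $(u,v)\in\mathcal{V}_\tau$ is given, $(u,v)\in\mathcal{A}_{\,\widetilde C,\mu_c^+}$ holds for a threshold $\widetilde C\in(C_{min},C_{max})$ if and only if
\[
\mu(\{t\in[0,\tau):u(t)>\widetilde C\})\leqslant\mu_c^+\leqslant\mu(\{t\in[0,\tau):u(t)\geqslant\widetilde C\}).
\]
Indeed, given such a $\widetilde C$, one takes $A^+$ to be $\{u>\widetilde C\}$ augmented by a measurable portion of $\{u=\widetilde C\}$ so that $\mu(A^+)=\mu_c^+$ exactly, and $A^-=[0,\tau)\setminus A^+$; conditions 1)--4) of Definition~\ref{def3.1} are then immediate. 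So the lemma reduces to exhibiting one admissible threshold.

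Second, I would argue by contradiction, exactly as in the proof of Lemma~\ref{lem2}. The maps $\widetilde C\mapsto\mu(\{u>\widetilde C\})$ and $\widetilde C\mapsto\mu(\{u\geqslant\widetilde C\})$ are non-increasing, the latter is left-continuous, $\mu(\{u\geqslant C_{min}\})=\tau\geqslant\mu_c^+$, and the two quantities cannot straddle $\mu_c^+$ simultaneously at any point. Hence, if no admissible $\widetilde C$ exists, one of two extreme alternatives must occur: either $\mu(\{u>\widetilde C\})>\mu_c^+$ for every $\widetilde C<C_{max}$, whence, letting $\widetilde C\uparrow C_{max}$, $\mu(\{u=C_{max}\})\geqslant\mu_c^+$ (which a short separate argument, in the spirit of the sets $A^+_\delta$ used in Lemma~\ref{lem2}, either sharpens to a strict inequality or already forces $u$ to be two-valued); or, symmetrically, $\mu(\{u\geqslant\widetilde C\})<\mu_c^+$ for every $\widetilde C>C_{min}$, whence, letting $\widetilde C\downarrow C_{min}$, $\mu(\{u=C_{min}\})\geqslant\tau-\mu_c^+=\mu_c^-$.

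Third, I would derive a contradiction with the isoperimetric data \eqref{isoperim2} and \eqref{assump2}. In the first alternative, write \eqref{isoperim2} as $\int_0^\tau u(t)v(t)\,dt=\tau\overline{C}\,\overline{v}$, split the integral over $E:=\{u=C_{max}\}$ and its complement, and estimate from below using $u\equiv C_{max}$ on $E$, $u\geqslant C_{min}$ off $E$, $v\geqslant v_{min}$, and $\int_0^\tau v\,dt=L$; inserting the explicit value of $\mu_c^+$ from Definition~\ref{B_class}, treated separately in cases $(i)$ and $(ii)$, this lower bound exceeds $\tau\overline{C}\,\overline{v}$, a contradiction. The second alternative is handled symmetrically with $v\leqslant v_{max}$ and $E:=\{u=C_{min}\}$, yielding $\int_0^\tau uv\,dt<\tau\overline{C}\,\overline{v}$. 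The main obstacle, and the genuine departure from Lemma~\ref{lem2}, is precisely this last step: because the constraint is weighted by the free control $v$, the crude one-sided bounds on the flow rate must be matched against the correct threshold measure, and it is exactly the ``high concentration paired with extreme flow rate'' structure encoded in Definition~\ref{B_class} (the factor $v_{min}$ in case $(i)$, respectively $v_{max}$ in case $(ii)$, entering $\mu_c^+$) that makes those bounds tight enough to close the estimate; the borderline configurations of equality in \eqref{assump2*}, and the passage from a non-strict to a strict inequality in the alternatives above, will also require some care.
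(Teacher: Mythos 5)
Your proposal is correct and follows essentially the same route as the paper: argue by contradiction, reduce the failure of membership to the dichotomy $\mu(\{u>\widetilde C\})>\mu_c^+$ for all $\widetilde C$ (or the symmetric alternative), conclude that $u$ is essentially $C_{max}$ on a set of measure exceeding $\mu_c^+$, and then violate the weighted isoperimetric constraint \eqref{isoperim2} by bounding $v$ with $v_{min}$ (resp. $v_{max}$) and using $\int_0^\tau v\,dt=L$, treating cases $(i)$ and $(ii)$ of Definition~\ref{B_class} separately. Your explicit level-set reformulation and the attention to strict versus non-strict inequalities are in fact slightly more careful than the paper's ``by the same logic as Lemma~\ref{lem2}'' shortcut, but the underlying argument is identical.
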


\begin{proof}
Assume that there exists a vector-function $(u,v)\in\mathcal{U}_{\,\tau}$ which does not belong to any class $\mathcal{A}_{\,\widetilde{C},\mu_c^+}$.  Due to Definition~\ref{def3.1}, this means that, for any $\widetilde{C}\in(C_{min},C_{max})$, either
\begin{equation}\label{mes++}
    \mu(\{t\in[0,\tau):u(t)>\widetilde{C}\})>\mu_c^+,
\end{equation}
or
\begin{equation}\label{mes--}
    \mu(\{t\in[0,\tau):u(t)<\widetilde{C}\})>\mu_c^-.
\end{equation}

By the same logic as in Lemma~\ref{lem2}, we obtain that  $u(t)=C_{max}$ for $\mu$-a.a. $t\in A^+_\delta$ in the case \eqref{mes++}. Now we check if the isoperimetric constraint \eqref{isoperim2} holds. We will consider two cases $(i)$ and $(ii)$ separately. In case $(i)$, we have:
\begin{align*}
\frac 1\tau\int_0^\tau u(t)v(t)\,dt&=\frac 1\tau\int_{A^+_\delta} C_{max}v(t)\,dt+\frac 1\tau\int_{[0,\tau)\setminus A^+_\delta} u(t)v(t)\,dt
\\&\geqslant \frac 1\tau\int_{A^+_\delta} C_{max}v(t)\,dt+\frac 1\tau\int_{[0,\tau)\setminus A^+_\delta} C_{min}v(t)\,dt
\\&\geqslant \frac 1\tau\int_{A^+_\delta} (C_{max}-C_{min})v_{min}\,dt+\frac 1\tau C_{min}L
\\&>\frac 1\tau\mu_c^+ (C_{max}-C_{min})v_{min}+\frac 1\tau C_{min}L
=\overline{C}\overline{v}.
\end{align*}
In case $(ii)$, we have:
\begin{align*}
\frac 1\tau\int_0^\tau u(t)v(t)\,dt&=\frac 1\tau\int_{A^+_\delta} C_{max}v(t)\,dt+\frac 1\tau\int_{[0,\tau)\setminus A^+_\delta} u(t)v(t)\,dt
\\&\geqslant \frac 1\tau\int_{A^+_\delta} C_{max}v(t)\,dt+\frac 1\tau\int_{[0,\tau)\setminus A^+_\delta} C_{min}v(t)\,dt
\\&\geqslant \frac 1\tau C_{max}L- \frac 1\tau\int_{[0,\tau)\setminus A^+_\delta} (C_{max}-C_{min})v_{max}\,dt
\\&>\frac 1\tau C_{max}L-\frac 1\tau\mu_c^-(C_{max}-C_{min})v_{max}
=\overline{C}\,\overline{v}.
\end{align*}

Thus we get that the isoperimetric constraint \eqref{isoperim2} is violated in both cases, so $(u,v)\notin\mathcal{V}_{\,\tau}$, which contradicts our assumption.

Using the same arguments, one can prove that $\frac 1\tau\int_0^\tau u(t)v(t)\,dt<\overline{C}\,\overline{v}$ in the case \eqref{mes--}.
\end{proof}

\begin{proof}[Proof of Theorem \ref{th2}.]

To solve Problem~\ref{prob2} under assumptions \eqref{assump2}, \eqref{assump2*}
in the case $n=1$, we minimize the cost functional~\eqref{cost2**} because of Lemma~\ref{lemma_cost}. Computing  the cost value directly, we obtain
\begin{equation*}
J[C_{A_0},v]=\frac1\tau\int_0^\tau C_{A_0}(t)e^{-k\tau}v(t)dt=\overline{C}\,\overline{v}e^{-k\tau},
\end{equation*}
which proves the first assertion of the theorem.

For the case $n>1$, due to Lemmas \ref{lemma_period}, \ref{lemma_cost}, \ref{lem3.4}, \ref{lem3.5}, we conclude that the control functions from $\mathcal{B}_\tau$ have the best performance in terms of Problem~\ref{prob2}, so the bang-bang strategy is optimal.  

\end{proof}

\begin{remark}\label{freq2}
Considering the proposed bang-bang control strategy, we note that the number of switchings of $v(t)$ and $C_{A_0}(t)$ is not crucial
due to the fact that the integrand in~\eqref{cost2*} does not depend on $t$ explicitly.
So, from the theoretical viewpoint, the switching frequency can be chosen in an arbitrary way that preserves the established measures of $A^\pm$ and $B^\pm$.
However, it may not be desirable to switch too often from a practical point of view. Thus, the simplest optimal control strategy is parameterized by the sets $A^\pm$ and $B^\pm$ in the form of intervals, so that each control $C(t)$ and $v(t)$ has one switching per period $\tau$ as depicted in Fig.\,\ref{fig0}.
\end{remark}

\section{Case study}\label{case_study}

\subsection{Comparison of different control strategies}

In this section, we consider the system~\eqref{eq1}, \eqref{bound1} under the following choice of model parameters (cf.~\cite{MS-MP2008}):
\begin{equation}\label{values}
\begin{aligned}
\overline{C}&=1\ mol\,m^{-3};
\qquad &&n=2;
\qquad &&&k=0.001\ s^{-1}mol^{-1};
\\C_{max}&=1.5\ mol\,m^{-3};
\qquad && L=1\ m;
\  &&&v=0.01\ m\,s^{-1};
\\C_{min}&=0.5\ mol\,m^{-3};
\qquad &&\tau=100\ s;
\end{aligned}
\end{equation}
We define the sinusoidal function
\begin{equation*}
C_{sin}(t)=\overline{C}+(\overline{C}-C_{min})\sin\left(\frac{2\pi}{\tau}t\right)=
1+0.5\sin\frac{\pi t}{50}
\end{equation*}
and compare it with the bang-bang control function $C_b(t)$:
\begin{equation*}
C_b(t)=
\begin{cases}
1.5, &\text{ if }t\in [0,50),
\\ 0.5, &\text{ if }t\in [50,100),
\end{cases} \text{ and } \; C_b(t+100) = C_b(t) \; \text {for all} \; t\in {\mathbb R}.
\end{equation*}
Now we compute directly the cost functional for both functions:
\begin{equation*}
\begin{aligned}
    J[C_{sin}]&=\frac1\tau\int_0^\tau \Phi(C_{sin}(t))\,v\, dt= 10^{-4}\int_0^{100}\frac{10\,C_1(t)}{10+C_1(t)}dt
    = 10^{-4}\int_0^{100}\frac{10+5\sin\left(\frac{\pi t}{50}\right)}{11+0.5\sin\left(\frac{\pi t}{50}\right)}dt
    \\&\approx 8.9968\cdot 10^{-3} \left(\frac{mol}{m^2\,s}\right),
\end{aligned}
\end{equation*}
\begin{equation*}
\begin{aligned}
    J[C_b]&=\frac v\tau\int_0^\tau \Phi(C_b(t)) dt
    = 10^{-4}\int_0^{50}\frac{10\,C_{max}}{10+C_{max}}dt+ 10^{-4}\int_{50}^{100}\frac{10\,C_{min}}{10+C_{min}}dt\\&\approx 8.9027\cdot 10^{-3} \left(\frac{mol}{m^2\,s}\right).
\end{aligned}
\end{equation*}
So, the difference between these costs is approximately $9.416\cdot 10^{-5}\,{mol}\,{m^{-2}s^{-1}}$, which illustrates that the bang-bang strategy has more $A$ consumed and thus the performance is about $1.05\,\%$ better than for the sinusoidal one.

Calculating  for comparison  the cost function for the conventional steady-state operation
\begin{equation*}
    J[\overline{C}]=\frac1\tau\int_0^\tau \Phi(\overline{C})\,v\, dt= 10^{-2}\int_0^{100}\frac{10\,\overline{C}}{10+\overline{C}}\,dt\approx 9.0909\cdot 10^{-3} \left(\frac{mol}{m^2\,s}\right),
\end{equation*}
we  see that the bang-bang strategy is $2.07\,\%$ better than the steady-state control.

Now we also evaluate the performance of the case with controlled flow-rate and compare it with the above results. Consider the following control function which allows flow-rate modulations also deviating by $50\,\%$ from the steady state values:
\begin{equation*}
v(t)=
\begin{cases}
0.005, &\text{ if }t\in [0,50)\cup\{100\},
\\ 0.015, &\text{ if }t\in [50,100),
\end{cases}
\text{ and } \; v(t+100) = v(t) \; \text {for all} \; t\in {\mathbb R}.
\end{equation*}
and calculate the cost functional~\eqref{cost2*} for the parameters \eqref{values}:
\begin{equation*}
\begin{aligned}
    J[C_b,v]&=\frac1\tau\int_0^\tau \left[C_b(t)^{-(n-1)}+k(n-1)\tau\right]^{-\frac1{n-1}}v(t)dt\\&=
    10^{-2}\int_0^{50}\frac{10\,C_{max}}{10+C_{max}}\,v_{min}\,dt+  10^{-2}\int_{50}^{100}\frac{10\,C_{min}}{10+C_{min}}\,v_{max}\,dt \approx 6.8323\cdot 10^{-3} \left(\frac{mol}{m^2\,s}\right).
\end{aligned}
\end{equation*}
Thus, the predicted performance of the two-input control strategy for this scenario is $23.26\,\%$ better than the single-input bang-bang strategy, and is $24.84\,\%$ better than the steady-state.

\subsection{The impact of ``forcing parameters''}

In this subsection, we investigate the role of control design parameters that could impact the performance of the reaction model. Usually, authors investigate the frequency of switching, the phase shift with  two controls, and the amplitude as ``forcing parameters'' (see, e.g., \cite{F21}).

Once the period of operation $\tau$ is fixed, the \textbf{frequency} of the switching for the bang-bang strategy does not impact the performance (see Remarks~\ref{freq1}, \ref{freq2}). So, the number and the frequency of switchings can be chosen arbitrarily, provided that the measure of appropriate sets in Theorems~\ref{th1} and~\ref{th2} is preserved.

It has been noted in the proof of Theorem~\ref{th2} that the main principle of optimizing the \textbf{phase shift} (as the difference between switching times of the two controls) is that the minimum concentration should be at the same time when the maximum flow-rate is applied. So, if $\overline{C}-C_{min}=C_{max}-\overline{C}$ and $\overline{v}-v_{min}=v_{max}-\overline{v}$, then the switching point(-s) should be the same for both controls, and the values should be opposite. 

Now we study the \textbf{amplitude} impact. We  call  $\alpha:=\frac{\overline{C}-C_{min}}{\overline{C}}$  the \textit{concentration amplitude} under the assumption  $\overline{C}-C_{min}=C_{max}-\overline{C}$. Similarly, we define the \textit{flow-rate amplitude} as the value $\beta:=\frac{\overline{v}-v_{min}}{\overline{v}}$ under the assumption   $\overline{v}-v_{min}=v_{max}-\overline{v}$. It is obvious that these amplitude values are considered in the range $(0,1)$.

For the single-input case from Section~\ref{PFR}, we consider the cost function for bang-bang controls as a function of the concentration amplitude $\alpha$. If the parameters are given by~\eqref{values} with varying concentration amplitude $\alpha$, then the cost functional~\eqref{cost1*} takes the form
\begin{equation}\label{cost_values_1}
\begin{aligned}
    J[C_{b}]&=J_\alpha[C_{b}]=\frac v\tau\int_0^\tau \Phi(C_b(t))\,dt
    = 5\cdot10^{-3}\left(\Phi\left(\overline{C}(1+\alpha)\right)+ \Phi\left(\overline{C}(1-\alpha)\right)\right)
    \\&= 5\cdot10^{-3}\left(\frac{10(1+\alpha)}{11+\alpha}+ \frac{10(1-\alpha)}{11-\alpha}\right)=
    \frac{\alpha^2-11}{10\alpha^2-1210}.
\end{aligned}
\end{equation}
The amplitude dependence is illustrated in Fig.2(a). One can see that the larger the amplitude, the better the performance is. Now we investigate the potential for improvement. For this purpose, we define the following function which shows the percentage of improvement in comparison with the steady-state:
\begin{equation}\label{perc_values_1}
    P_{C_{b},\overline{C}}(\alpha)=100\left(1-\frac{J_\alpha[C_b]}{J[\overline{C}]}\right)= 
    \frac{1000\alpha^2}{121-\alpha^2}.
\end{equation}

The graph of the function $P_{C_{b},\overline{C}}(\alpha)$ is shown in Fig.2(b). As one can see, the performance can be improved up to $8.26\,\%$ in comparison with the steady-state. But it should be taken into account that, in the limiting case $\alpha=1$, the concentration is zero for half of the time period (and thus there is no chemical reaction at all).

\begin{figure}[H]
\centering
\subfloat[$J$, Cost functional]{
\scalebox{0.55}{
%
%
\begin{tikzpicture}

\begin{axis}
[width=4in,
height=3.4in,
at={(0.758in,0.481in)},
scale only axis,
xmin=0,
xmax=1,
xlabel style={font=\color{white!15!black}},
xlabel={$\alpha$, concentration amplitude},
ymin=0.0083,
ymax=0.0091,
ylabel style={font=\color{white!15!black}},
ylabel={$J$, cost functional, $mol\,s^{-1}\,m^{-2}$},
axis background/.style={fill=white},
label style = {font=\Large},
tick label style = {font=\large},
xmajorgrids,
ymajorgrids
]
\addplot [color=blue, line width=2.0pt, forget plot]
  table[row sep=crcr]{%
0	0.00909090909090909\\
0.01	0.00909083395936691\\
0.02	0.00909060856399525\\
0.03	0.00909023290255878\\
0.04	0.00908970697133185\\
0.05	0.00908903076509845\\
0.06	0.00908820427715205\\
0.07	0.00908722749929543\\
0.08	0.00908610042184049\\
0.09	0.00908482303360803\\
0.1	0.00908339532192743\\
0.11	0.00908181727263636\\
0.12	0.00908008887008041\\
0.13	0.00907821009711274\\
0.14	0.00907618093509362\\
0.15	0.00907400136388998\\
0.16	0.00907167136187491\\
0.17	0.00906919090592712\\
0.18	0.00906655997143037\\
0.19	0.00906377853227285\\
0.2	0.00906084656084656\\
0.21	0.00905776402804659\\
0.22	0.0090545309032704\\
0.23	0.0090511471544171\\
0.24	0.0090476127478866\\
0.25	0.00904392764857881\\
0.26	0.00904009181989276\\
0.27	0.0090361052237257\\
0.28	0.00903196782047211\\
0.29	0.00902767956902277\\
0.3	0.00902324042676371\\
0.31	0.00901865034957516\\
0.32	0.00901390929183044\\
0.33	0.00900901720639485\\
0.34	0.00900397404462445\\
0.35	0.00899877975636491\\
0.36	0.00899343428995023\\
0.37	0.00898793759220142\\
0.38	0.00898228960842526\\
0.39	0.00897649028241285\\
0.4	0.00897053955643826\\
0.41	0.00896443737125709\\
0.42	0.00895818366610497\\
0.43	0.00895177837869604\\
0.44	0.00894522144522144\\
0.45	0.00893851280034769\\
0.46	0.00893165237721503\\
0.47	0.00892464010743581\\
0.48	0.00891747592109273\\
0.49	0.00891015974673712\\
0.5	0.00890269151138716\\
0.51	0.00889507114052604\\
0.52	0.00888729855810008\\
0.53	0.00887937368651688\\
0.54	0.00887129644664332\\
0.55	0.0088630667578036\\
0.56	0.00885468453777725\\
0.57	0.00884614970279701\\
0.58	0.0088374621675468\\
0.59	0.0088286218451595\\
0.6	0.00881962864721485\\
0.61	0.00881048248373718\\
0.62	0.00880118326319315\\
0.63	0.0087917308924895\\
0.64	0.00878212527697064\\
0.65	0.00877236632041633\\
0.66	0.00876245392503923\\
0.67	0.00875238799148245\\
0.68	0.00874216841881703\\
0.69	0.00873179510453943\\
0.7	0.00872126794456892\\
0.71	0.00871058683324495\\
0.72	0.00869975166332452\\
0.73	0.00868876232597946\\
0.74	0.00867761871079364\\
0.75	0.00866632070576025\\
0.76	0.00865486819727891\\
0.77	0.00864326107015284\\
0.78	0.00863149920758591\\
0.79	0.00861958249117971\\
0.8	0.00860751080093054\\
0.81	0.00859528401522636\\
0.82	0.00858290201084373\\
0.83	0.00857036466294465\\
0.84	0.00855767184507342\\
0.85	0.00854482342915342\\
0.86	0.00853181928548383\\
0.87	0.00851865928273639\\
0.88	0.00850534328795198\\
0.89	0.00849187116653731\\
0.9	0.00847824278226142\\
0.91	0.00846445799725227\\
0.92	0.00845051667199318\\
0.93	0.00843641866531929\\
0.94	0.00842216383441395\\
0.95	0.00840775203480506\\
0.96	0.00839318312036136\\
0.97	0.00837845694328876\\
0.98	0.00836357335412647\\
0.99	0.00834853220174321\\
1	0.00833333333333333\\
};
\end{axis}

\begin{axis}[%
width=2in,
height=2in,
at={(0in,0in)},
scale only axis,
xmin=0,
xmax=1,
ymin=0,
ymax=1,
axis line style={draw=none},
ticks=none,
axis x line*=bottom,
axis y line*=left
]
\end{axis}
\end{tikzpicture}
}
\hspace{0mm}
\subfloat[$P$, Percentage function]{
\scalebox{0.55}{
%
%
\begin{tikzpicture}

\begin{axis}[%
width=4in,
height=3.4in,
at={(0.758in,0.481in)},
scale only axis,
xmin=0,
xmax=1,
xlabel style={font=\color{white!15!black}},
xlabel={$\alpha$, concentration amplitude},
ymin=0,
ymax=9,
ylabel style={font=\color{white!15!black}},
ylabel={$P$, improvement percentage, $\%$},
axis background/.style={fill=white},
label style = {font=\Large},
tick label style = {font=\large},
xmajorgrids,
ymajorgrids
]
\addplot [color=blue, line width=2.0pt, forget plot]
  table[row sep=crcr]{%
0	0\\
0.01	0.000826446964005755\\
0.02	0.00330579605221835\\
0.03	0.00743807185342701\\
0.04	0.0132233153496245\\
0.05	0.0206615839170231\\
0.06	0.0297529513274775\\
0.07	0.0404975077503139\\
0.08	0.0528953597545655\\
0.09	0.0669466303116159\\
0.1	0.0826514587982478\\
0.11	0.1000100010001\\
0.12	0.119022429115531\\
0.13	0.13968893175989\\
0.14	0.162009713970197\\
0.15	0.185984997210225\\
0.16	0.211615019376\\
0.17	0.238900034801701\\
0.18	0.267840314265969\\
0.19	0.298436144998632\\
0.2	0.330687830687831\\
0.21	0.364595691487559\\
0.22	0.40016006402561\\
0.23	0.43738130141194\\
0.24	0.47625977324743\\
0.25	0.516795865633075\\
0.26	0.558989981179568\\
0.27	0.602842539017309\\
0.28	0.648353974806817\\
0.29	0.695524740749562\\
0.3	0.744355305599206\\
0.31	0.794846154673257\\
0.32	0.846997789865142\\
0.33	0.900810729656691\\
0.34	0.956285509131038\\
0.35	1.01342267998594\\
0.36	1.0722228105475\\
0.37	1.13268648578433\\
0.38	1.19481430732213\\
0.39	1.25860689345864\\
0.4	1.32406487917908\\
0.41	1.39118891617197\\
0.42	1.45997967284537\\
0.43	1.53043783434355\\
0.44	1.6025641025641\\
0.45	1.67635919617542\\
0.46	1.75182385063466\\
0.47	1.82895881820613\\
0.48	1.90776486798002\\
0.49	1.98824278589167\\
0.5	2.0703933747412\\
0.51	2.15421745421356\\
0.52	2.23971586089907\\
0.53	2.32688944831431\\
0.54	2.41573908692353\\
0.55	2.5062656641604\\
0.56	2.59847008445028\\
0.57	2.69235326923284\\
0.58	2.78791615698521\\
0.59	2.88515970324545\\
0.6	2.9840848806366\\
0.61	3.08469267889104\\
0.62	3.18698410487532\\
0.63	3.29096018261554\\
0.64	3.39662195332298\\
0.65	3.50397047542037\\
0.66	3.61300682456845\\
0.67	3.72373209369305\\
0.68	3.83614739301264\\
0.69	3.95025385006625\\
0.7	4.06605260974193\\
0.71	4.18354483430557\\
0.72	4.30273170343023\\
0.73	4.42361441422596\\
0.74	4.54619418126995\\
0.75	4.67047223663726\\
0.76	4.79644982993197\\
0.77	4.92412822831876\\
0.78	5.05350871655498\\
0.79	5.18459259702316\\
0.8	5.31738118976404\\
0.81	5.45187583251\\
0.82	5.58807788071897\\
0.83	5.72598870760886\\
0.84	5.86560970419238\\
0.85	6.00694227931242\\
0.86	6.14998785967783\\
0.87	6.29474788989971\\
0.88	6.44122383252818\\
0.89	6.58941716808962\\
0.9	6.73932939512439\\
0.91	6.89096203022504\\
0.92	7.044316608075\\
0.93	7.19939468148776\\
0.94	7.35619782144653\\
0.95	7.5147276171444\\
0.96	7.674985676025\\
0.97	7.8369736238236\\
0.98	8.00069310460881\\
0.99	8.16614578082468\\
1	8.33333333333333\\
};
\end{axis}

\begin{axis}[%
width=2in,
height=2in,
at={(0in,0in)},
scale only axis,
xmin=0,
xmax=1,
ymin=0,
ymax=1,
axis line style={draw=none},
ticks=none,
axis x line*=bottom,
axis y line*=left
]
\end{axis}
\end{tikzpicture}
}
\caption{The graphs of the cost functional $J$ from \eqref{cost_values_1} and the percentage function $P$ from \eqref{perc_values_1} as functions of the concentration amplitude $\alpha$ under the choice of parameters~\eqref{values}.
}
\label{fig1}
\end{figure}
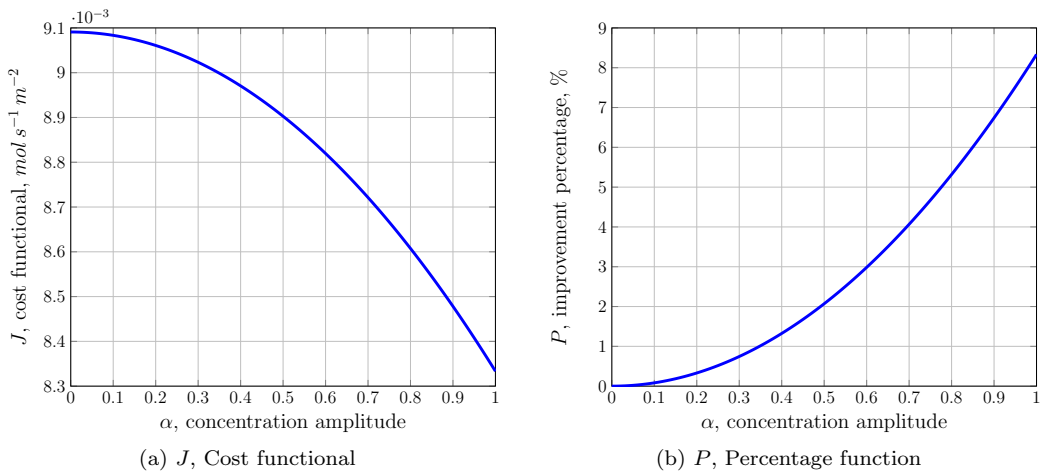

Considering the case with two control functions from Section~\ref{PFR2}, we can investigate the same dependencies. Namely, the cost functional has the following form:
\begin{equation}\label{cost_values_2}
\begin{aligned}
    J[C_{b},v_b]&=J_{\alpha,\beta}[C_{b},v_b]=\frac1\tau\int_0^\tau \Psi(C_b(t))\,v_b(t)\,dt =\frac{\overline{v}}{2}\left(\Psi\left(\overline{C}(1+\alpha)\right)(1-\beta)+ \Psi\left(\overline{C}(1-\alpha)\right)(1+\beta)\right)
    \\&=5\cdot10^{-3}\left(\frac{10(1+\alpha)(1-\beta)}{11+\alpha}+ \frac{10(1-\alpha)(1+\beta)}{11-\alpha}\right)
    =\frac{\alpha^2+10\alpha\beta-11}{10\alpha^2-1210}.
\end{aligned}
\end{equation}
Thus, the percentage function takes the form:
\begin{equation}\label{perc_values_2}
    P_{C_{b},\overline{C}}(\alpha,\beta)=100\left(1-\frac{J_{\alpha,\beta}[C_b,v_b]}{J[\overline{C},\overline{v}]}\right)= 
    \frac{1000\,\alpha\,(\alpha+11\beta)}{121-\alpha^2}.
\end{equation}

The graphs of the cost  and the percentage function are presented in Fig.3(a) and Fig.3(b), respectively. In this case, the performance could be improved theoretically up to $100\,\%$ (in the case of maximum concentration and flow-rate amplitudes) in comparison with the steady-state.

\begin{figure}[H]
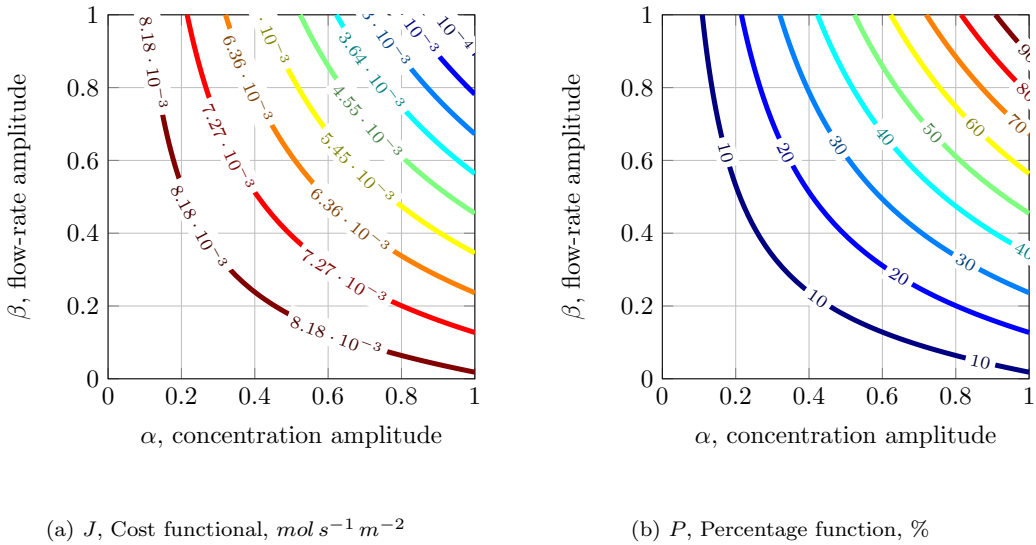

\centering
\subfloat[$J$, Cost functional, $mol\,s^{-1}\,m^{-2}$]{
\scalebox{0.95}{\input{cost2.tikz}}
}
\hspace{0mm}
\subfloat[$P$, Percentage function, $\%$]{
\scalebox{0.95}{\input{perc2.tikz}}
}
\caption{The cost $J$ from \eqref{cost_values_2} and the percentage function $P$ from \eqref{perc_values_2} 
depending on the concentration amplitude $\alpha$ and flow-rate amplitude $\beta$.
The levels with specified values are marked with color lines.
}
\label{fig2}
\end{figure}

\begin{remark}
In the limiting cases $\alpha=1$ or $\beta=1$, there are  situations when the flow-rate is stopped ($v=0$) or the inlet concentration vanishes ($C_{A_0}=0$), so the reaction is stopped. In these cases, the investigated mathematical model cannot be used to describe the ongoing process. Such limiting cases should be treated separately as our analysis is not  directly applicable.
\end{remark}

\section{Conclusion and future work}\label{conc}

The optimal control problem for an isothermal PFR model with a single periodic input has been completely  solved in Section~\ref{PFR}, and the bang-bang optimal control strategies for periodic chemical reactions in an isothermal PFR with a controlled flow-rate have been proposed in Section~\ref{PFR2}. 

An open problem is to study the relevant isoperimetric optimal control problem for a more realistic {\em non-isothermal} PFR model (see, e.g.,~\cite{TCR87,WWL2011}).

The optimization of periodic reactions in a Dispersed Flow Tubular Reactor (DFTR) is also a challenging task in this direction. Namely, a  parabolic DFTR model~\cite[p.~394]{SH13} could be considered in 
future.


\section*{Acknowledgments}%
\addcontentsline{toc}{section}{Acknowledgments}

The second author was supported by the German Research Foundation (DFG) under Grant ZU 359/2-1.


\addcontentsline{toc}{section}{References}
\bibliographystyle{plainurl}

\end{document}